\renewcommand{\and}{~~{\rm  and}~~}
\newcommand{\0}{\emptyset}
\newcommand{\tv }{{\rm TV}}
\newcommand{\bb}{\Big}
\newcommand{\beq}[1]{\begin{equation}\label{#1}}
\newcommand{\mn}[0]{\medskip\noindent}
\newtheorem{thm}{Theorem}[section]
\newtheorem{cor}[thm]{Corollary}
\newtheorem{lem}[thm]{Lemma}
\newtheorem{prop}[thm]{Proposition}
\newcommand{\bea}[1]{\begin{eqnarray}\label{#1}}
\newcommand{\eea}{\end{eqnarray}}
\newcommand{\gd}{\delta}
\newcommand{\bn}{\bigskip\noindent}
\newcommand{\sm}{\setminus}
\newcommand{\ra}{\rightarrow}
\newcommand{\pr}{\Pr}
\newcommand{\sub}{\subseteq}
\newcommand{\eps}{\varepsilon}
\newcommand{\bean}{\begin{eqnarray*}}
\newcommand{\eean}{\end{eqnarray*}}
\newcommand{\bin}{{\rm Bin}}
\newcommand{\eeq}{\end{equation}}
\newcommand{\old}[1]{}
\newcommand{\pp}{\partial}
\newcommand{\lsim}{~\mbox{\raisebox{-.6ex}{$\stackrel{\textstyle{<}}{\sim}$}}~}
\newcommand{\h}{{\mathcal H}}
\renewcommand{\include}{\old}
\newcommand{\edc}{\end{document}}
\newcommand{\ph}{\hat{p}}
\newcommand{\bp}{\mathbf{p}_{_{\! \tiny 4}}}
\renewcommand{\pp}{p_{_{\! 2}}}
\renewcommand{\tt}{t_{_{\! 0}}}
\newcommand{\qq}{q_{_{ 0}}}
\newcommand{\qtwo}{q_{_{ 2}}}
\newcommand{\ppp}{p_{_{\! 3}}}
\newcommand{\pk}{p_{_{\! k}}}
\newcommand{\rk}{r_{_{\! k}}}
\newcommand{\pl}{p_{_{\! 4}}}
\newcommand{\G}{\mathcal{G}}
\def\({\left(}
\def\){\right)} 
\def\[{\left[}
\def\]{\right]} 
\newcommand{\cA}{\mathcal{A}}
\newcommand{\reg}{\topmargin -3pt \advance \topmargin by
-\headheight \advance \topmargin by -\headsep \textheight 9.1in
\oddsidemargin -.1in \evensidemargin \oddsidemargin
\marginparwidth 0.5in \textwidth 6.7in}
\DeclareMathOperator{\var}{Var}
\begin{document}

\title{On the total variation distance between the binomial random graph and the random intersection graph}

\author[J.~H.~Kim]{Jeong Han Kim}
\address{School of Computational Sciences \\ Korea Institute for Advanced Study \\ Seoul, South Korea {\rm(J.~H.~Kim)}} 
\email{jhkim@kias.re.kr}
\thanks{The first author was supported by the National Research Foundation of Korea (NRF) Grant funded by the Korean Government (MSIP) (NRF-2012R1A2A2A01018585) and KIAS internal Research Fund CG046001.}

\author[S.~J.~Lee]{Sang June Lee}
\address{Department of Mathematics \\  Duksung Women's University \\ Seoul, South Korea {\rm(S.~J.~Lee)}} 
\email{sanglee242@duksung.ac.kr, sjlee242@gmail.com}
\thanks{The second author was supported by Basic Science Research Program through the National Research Foundation of Korea (NRF) funded by the Ministry of Science, ICT \& Future Planning (NRF-2013R1A1A1059913). This work was partially done while the second author was visiting Korea Institute for Advanced Study (KIAS)} 

\author[J.~Na]{Joohan Na}
\address{School of Computational Sciences \\ Korea Institute for Advanced Study \\ Seoul, South Korea {\rm(J.~Na)}} 
\email{jhna@kias.re.kr, najoohan@gmail.com}
\thanks{The third author was supported by KIAS internal Research Fund CG053601.}

\date{\today}

\maketitle

\begin{abstract}
When each vertex is assigned a set, the intersection graph generated by the sets is the graph in which two distinct vertices are joined by an edge if and only if their assigned sets have a nonempty intersection. An interval graph is an intersection graph generated by intervals in the real line. A chordal graph can be considered as an intersection graph generated by subtrees of a tree. In 1999, Karo\'nski, Scheinerman and Singer-Cohen [Combin Probab Comput 8 (1999), 131--159] introduced a random intersection graph by taking randomly assigned sets. The random intersection graph $G(n,m;p)$ has $n$ vertices and sets assigned to the vertices are chosen to be i.i.d. random subsets of a fixed set $M$ of size $m$ where each element of $M$ belongs to each random subset with probability $p$, independently of all other elements in $M$. Fill, Scheinerman and Singer-Cohen [Random Struct Algorithms 16 (2000), 156--176] showed that the total variation distance between the random graph $G(n,m;p)$ and the Erd\"os-R\'enyi graph $G(n,\hat{p})$ tends to $0$ for any $0 \leq p=p(n) \leq 1$ if $m=n^{\alpha}$, $\alpha >6$, where $\ph$ is chosen so that the expected numbers of edges in the two graphs are the same. In this paper, it is proved  that the total variation distance still tends to $0$ for any $0 \leq p=p(n) \leq 1$ whenever $m \gg n^4$. 
\end{abstract}

\section{Introduction}
The {\it intersection graph} on  $V:=\{1,\ldots,n\}$ generated by a collection $\{ L_1, \ldots, L_n\}$ of sets is the graph on $V$ in which two distinct vertices $i$ and $j$ are adjacent if and only if their corresponding sets $L_i$ and $L_j$ have a nonempty intersection. In 1945, Szpilrajn-Marczewski \cite{intgraph45} observed that every graph may be represented as an intersection graph. Later, Erd\H{o}s, Goodman and P\'osa \cite{EGP66} showed that every graph with $n$ vertices can be represented as an intersection graph generated by subsets of a set of $n^2/4$ elements. An interval graph is an intersection graph generated by intervals in the real line. A chordal graph turned out to be an intersection graph generated by subtrees of a tree \cite{chordal74}. In general, a class of graphs is called an \textit{intersection class} of a family $\mathcal{F}$ of sets if each graph in the class is an intersection graph generated by sets in $\mathcal{F}$. Scheinerman \cite{Sch85} found a necessary and sufficient condition for a class of graphs to be an intersection class of a family $\mathcal{F}$ of sets. Intersection graphs have been applied to phylogeny problems in biology \cite{biochord10}, seriation problems in psychology \cite{Hub74}, and contingency tables in statistics \cite{Khamis97}, etc. For more details, see \cite{TopIG99}.

In 1999, Karo\'nski, Scheinerman and Singer-Cohen \cite{KSS99} introduced the \textit{random intersection graph}, which is the intersection graph generated by independent and identically distributed (i.i.d.) random subsets $L_1, ..., L_n$ of $M=\{1,..., m\}$. Fill, Scheinerman and Singer-Cohen \cite{FSS00} considered conditions under which the random intersection graph is essentially the binomial random graph (that is, the Erd\H{o}s-R\'enyi random graph with independently chosen edges) with the same expected number of edges. Let $G(n,m;p)$ denote the random intersection graph generated by i.i.d. random subsets $L_1, ..., L_n$ whose distributions are binomial with parameters $(m,p)$, i.e., for a subset $A$ of $M$, $\pr[L_i =A] = p^{|A|}(1-p)^{m-|A|}$. Fill, Scheinerman and Singer-Cohen were interested in how close $G(n,m;p)$ is to $G(n,\ph)$ in terms of total variation distance, where $\ph$ is chosen so that the expected numbers of edges in the two graphs are the same, i.e., 
\begin{equation*}
\ph:=1-(1-p^2)^m.
\end{equation*}

The total variation distance between two (graph-valued) random variables $X$ and $Y$ is defined by 
$$
\tv\(X, Y\)=\dfrac{1}{2}\sum_{G}\bb|\Pr\[X=G\]-\Pr\[Y=G\] \bb|,
$$
where the sum is taken over all possible values of $X$ and $Y$. 

\begin{thm}[{\cite[Theorem 10]{FSS00}}]\label{thm:FSS}
Let $\alpha>6$ be a constant and $m = n^{\alpha}$. Then for any $0 \leq p=p(n) \leq 1$, 
$$ \tv\bb(G(n,m;p), G(n,\ph)\bb)=o(1).$$
\end{thm}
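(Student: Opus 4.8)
The plan is to exploit the bipartite description of the model and reduce everything to a comparison of two binomial random graphs. Let $W=(W_{i\ell})$ be the $n\times m$ incidence array, where $W_{i\ell}=1$ if and only if $\ell\in L_i$, so the $W_{i\ell}$ are i.i.d.\ Bernoulli($p$); for a feature $\ell\in M$ put $S_\ell=\{i\in V:W_{i\ell}=1\}$ and observe that the edge set of $G(n,m;p)$ is $\bigcup_{\ell\in M}\binom{S_\ell}{2}$, the union of the cliques on the $S_\ell$'s, with $|S_\ell|\sim\bin(n,p)$ independently over $\ell$. I would first dispose of large $p$: setting $p_{\max}:=\sqrt{3\ln n}\,n^{-\alpha/2}$, if $p>p_{\max}$ then $mp^2>3\ln n$, so $1-\ph=(1-p^2)^m\le e^{-mp^2}<n^{-3}$, and the expected number of non-edges of each of $G(n,m;p)$ and $G(n,\ph)$ is $\binom n2(1-\ph)=o(1)$; hence both equal $K_n$ with probability $1-o(1)$ and $\tv=o(1)$. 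So one may assume $p\le p_{\max}$, in which case $np\to0$ and, decisively, $mn^3p^3\le mn^3p_{\max}^3=(3\ln n)^{3/2}n^{3-\alpha/2}\to0$ precisely because $\alpha>6$.

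For $p\le p_{\max}$ I would proceed in three steps. Step one removes the ``bad'' features: since $\Pr[|S_\ell|\ge3]\le\binom n3 p^3$, the expected number of $\ell$ with $|S_\ell|\ge3$ is at most $m\binom n3 p^3\le\tfrac16 mn^3p^3=o(1)$ (the first of two uses of $\alpha>6$). Thus with probability $1-o(1)$ every $S_\ell$ has size at most $2$, and on that event $G(n,m;p)$ equals the graph $G'$ with edge set $\{S_\ell:|S_\ell|=2\}$, so $\tv(G(n,m;p),G')=o(1)$. Step two identifies the law of $G'$: the events $\{|S_\ell|=2\}$ are independent over $\ell$ with common probability $\beta:=\binom n2 p^2(1-p)^{n-2}=o(1)$, and conditionally on $|S_\ell|=2$ the pair $S_\ell$ is uniform on $\binom V2$, so $G'$ is distributed as the union of $b$ independent uniformly random elements of $\binom V2$, where $b\sim\bin(m,\beta)$.

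Step three is a Poissonization. Coupling $b$ with $\tilde b\sim\poi(m\beta)$ via a maximal coupling costs $\tv(\bin(m,\beta),\poi(m\beta))\le m\beta^2=O(mn^4p_{\max}^4)=o(1)$, and reusing the same i.i.d.\ sequence of pairs shows $\tv(G',G'')=o(1)$, where $G''$ is the union of $\tilde b$ independent uniform pairs. Now distributing a $\poi(m\beta)$ number of balls uniformly among the $N:=\binom n2$ elements of $\binom V2$ produces i.i.d.\ $\poi(m\beta/N)$ occupancy counts, so the indicators of occupied slots are i.i.d.\ Bernoulli($q$) with $q:=1-e^{-m\beta/N}=1-e^{-mp^2(1-p)^{n-2}}$; that is, $G''\sim G(n,q)$ \emph{exactly}. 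Finally, since $G(n,q)$ and $G(n,\ph)$ are products of Bernoullis over the $N$ edge slots, $\tv(G(n,q),G(n,\ph))\le N|q-\ph|$, and the estimates $|m\ln\tfrac1{1-p^2}-mp^2|=O(mp^4)$ and $|mp^2(1-p)^{n-2}-mp^2|=O(mnp^3)$ give $|q-\ph|=|(1-p^2)^m-e^{-mp^2(1-p)^{n-2}}|=O(mnp^3)$, whence $N|q-\ph|=O(mn^3p^3)=O((\ln n)^{3/2}n^{3-\alpha/2})=o(1)$ — the second, decisive use of $\alpha>6$. Chaining these bounds by the triangle inequality for total variation yields the claim, uniformly over $p\le p_{\max}$.

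I expect the real content of the hypothesis $\alpha>6$, and the only genuine difficulty, to be the bookkeeping that forces \emph{every} error term above to be $o(n^{-2})$, so that the crude comparison of two binomial random graphs over $\binom n2$ edge slots (which loses a factor $\binom n2$) survives. The two true obstructions — a feature landing on three or more vertices, and the gap between $\ph$ and the ``effective'' single-edge probability $q$ caused by $(1-p)^{n-2}\neq1$ — each contribute an error of order $n^{3-\alpha/2}$, which is $o(1)$ exactly at $\alpha>6$. The Poissonization is what buys \emph{exact} independence in $G''$ and hence avoids any further loss; without it one would instead have to handle the negative correlations among occupied slots in the balls-in-bins model, which is doable but less clean.
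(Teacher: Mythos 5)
Your proof is correct, and it is also self-contained and noticeably more elementary than the route implicit in the paper. The paper treats Theorem~\ref{thm:FSS} as a cited result of Fill--Scheinerman--Singer-Cohen and recovers it, in the remark preceding Lemmas~\ref{lem:caseI} and~\ref{lem:caseII}, as a degenerate case of its general chain of reductions $G(n,m;p)\to G(n,(\pk))\to G(n,\pp,\ppp,\pl)\to G(n,\pp)\to G(n,\ph)$: when $\alpha>6$, the whole range $p\le(\tfrac{3\log n}{m})^{1/2}$ sits below $\tfrac{\eps}{nm^{1/3}}$, so one may take $T=Q=\emptyset$ in~\eqref{main} and let the ${n\choose3}\ppp$, ${n\choose4}\pl$ terms vanish; the final link uses Janson's sharp bound (Corollary~\ref{lem:sec5}) on $\tv(\bin(N,p),\bin(N,q))$. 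You bypass all of the intermediate graph models. After disposing of large $p$, you note that for $\alpha>6$ the expected number of features of size $\ge3$ is already $o(1)$, so you may pass directly to the graph $G'$ generated by size-$2$ features; Poissonizing the $\bin(m,\beta)$ count of such features and splitting over the $\binom n2$ edge slots gives \emph{exactly} $G(n,q)$ with $q=\pp$, with no residual dependence to control; and the crude product bound $\tv(G(n,q),G(n,\ph))\le\binom n2|q-\ph|$ closes the argument. Both of your essential losses --- discarding $k\ge3$ features outright, and the factor-$\binom n2$ in the product bound --- are exactly of order $mn^3p^3=O((\log n)^{3/2}n^{3-\alpha/2})$, so the threshold $\alpha>6$ emerges cleanly and honestly from your argument. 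The trade-off is that neither shortcut survives below $\alpha=6$: one must then keep the $k=3,4$ cliques in play (which is Section~\ref{sec:ineq2}'s whole effort) and replace $\binom n2|q-\ph|$ by Janson's variance-normalized bound (which costs only $\sqrt{\binom n2}$), and that heavier machinery is precisely what the paper needs to push to $m\gg n^4$. One small remark: your Poisson approximation bound $\tv(\bin(m,\beta),\poi(m\beta))\le m\beta^2$ is (up to a harmless constant) Le Cam's inequality, and the even simpler Barbour--Holst bound $\le\beta$ used in the paper's Proposition in Section~\ref{sec:ineq1} would serve equally well here.
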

\noindent For $3 < \alpha \leq 6$, Rybarczyk \cite{R_equiv} proved a weaker result. Namely, for any monotone property $\cA$, $\pr[G(n,m;p) \in \cA]$ and $\pr[G(n,\ph) \in \cA]$ are essentially the same. The exact statements of the theorems there are rather complicated.

 A random intersection graph has received a lot of attention due to a great diversity of applications in areas such as epidemics \cite{epidemic08}, circuit design \cite{KSS99}, network user profiling \cite{profile04} and analysis of complex networks \cite{Bloz08, degcluster13, assort13, DK09}. For more information, we refer the reader to the survey papers \cite{Bl1, Bl2, Sp}. For instance, $G(n,m;p)$ is applicable for gate matrix circuit design, which is related to the optimization problem of finding a permutation of the order of gate lines that minimizes the number of horizontal tracks required to lay out the circuit. The problem is NP-hard in general, but it is solvable in $O(n)$ time when $G$ is an interval graph \cite{AGT04}. Karo\'nski, Scheinerman and Singer-Cohen \cite{KSS99} studied conditions for which $G(n,m;p)$ is an interval graph with high probability.

When $L_i$'s are uniformly distributed in the class of subsets of $M$ of the same size, the random intersection graph generated by the $L_i$'s is called a \textit{uniform random intersection graph}. An application to security of wireless sensor networks \cite{connect09, BJR09, Di06, R_diam} is one of the main motivations for studying the uniform random intersection graph. The random intersection graph can be generalized in the way that the vertices $i$ and $j$ are adjacent if $L_i$ and $L_j$ have at least $s \geq 1$ common elements.
The generalization is applicable for cluster analysis \cite{degcluster13, assort13, GJ03}.

The random intersection graph $G(n,m;p)$ may be defined using an $n\times m$ random matrix $R(n,m;p)$ whose rows are indexed by $i\in V$ and columns are indexed by $a\in M$. Each entry of the matrix is $1$ or $0$ with probability of $p$ and $1-p$, respectively, independently of all other entries. The row vector indexed by $ i \in V$ corresponds to the subset $L_i$ of $M$. On the other hand, the column vector indexed by $a\in M$ corresponds to the set $V_a$ of all vertices $i\in V$  with $a\in L_i$. The graph $G(n,m;p)$ may be alternatively constructed by taking the edge set to be the union of edge sets of the complete graphs on $V_a$ for all $a \in M$. 

The main difference between $G(n,m;p)$ and $G(n,\ph)$ are  the complete graphs  induced by the column vectors with three or more $1$'s. In particular, the triangles formed by the columns with exactly three 1's play an important role. Those triangles are to be called {\it artifact triangles}. Roughly speaking, if $mp^{2}$ is large, then $\ph$ is close to $1$ so that both of $G(n,m;p)$ and $G(n,\ph)$ are almost the complete graphs with high probability. On the other hand, if $mp^{2}$ is small, then the expected number of artifact triangles is ${n \choose 3}mp^3(1-p)^{n-3} = O( \frac{n^3}{m^{1/2}} (mp^2)^{3/2})$, which goes to $0$, provided $m \gg n^6$. Theorem \ref{thm:FSS} was proved based upon this observation. 

In this paper, we will show that the total variation distance is still small enough even if there are some artifact triangles. It is actually small as long as the expected number of pairs of distinct artifact triangles with a common edge is small. If the expected number is not small, the total variation distance may be small when both of $G(n,m;p)$ and $G(n,\ph)$ are almost the complete graphs with high probability. Based on these two facts, we infer that if $m \gg n^4$ then the total variation distance is always small for any $p$: It turns out that the expected number is $O(n^4 m^2 p^6)$. To have the total variation distance small for all $p$, it is required that $mp^2$ is large when $n^4 m^2 p^6=\frac{n^4}{m} (mp^2)^3$ is not small, which holds if $m \gg n^4$.

\mn

\begin{thm}\label{main1}
For $m \gg n^4$ and $0\leq p =p(n)\leq 1$, we have that
$$\tv\bb(G(n,m;p), G(n,\ph)\bb)=o(1).$$
\end{thm}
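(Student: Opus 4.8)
The plan is to split into two regimes according to the size of $mp^2$ and, after replacing the fixed number $m$ of columns by a Poisson number so that the columns become independent, to couple the two graphs. Since $\tv\big(G(n,m;p),G(n,\ph)\big)\to 0$ follows once it holds along every subsequence through a further subsequence, and since along any subsequence $mp^2$ is either bounded or has a further subsequence tending to $\infty$, it suffices to treat the two cases \textbf{(A)} $mp^2=O(1)$ and \textbf{(B)} $mp^2\to\infty$. These are exhaustive in the relevant sense precisely because $m\gg n^4$ forces the quantity $n^4m^2p^6=\frac{n^4}{m}(mp^2)^3$, whenever it fails to be $o(1)$, to come with $mp^2\to\infty$; in case (A) I will use that $n^4m^2p^6=O(n^4/m)=o(1)$ and that the expected number of columns of size $\ge 4$ is $O(n^4/m)=o(1)$, and in case (B) that $1-\ph=(1-p^2)^m\le e^{-mp^2}=o(1)$.

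\emph{Case (A): Poissonize and couple.} Let $\rho=\Pr[\bin(n,p)\ge 2]=O(n^2p^2)$. Conditioning on the number of columns whose support has size $\ge 2$ and replacing it by a Poisson variable of the same mean costs $O(m\rho^2)=O\big(\tfrac{n^4}{m}(mp^2)^2\big)=o(1)$ in total variation; after this the number of columns equal to each set $S$ (with $|S|\ge 2$) is an independent Poisson variable of mean $mp^{|S|}(1-p)^{n-|S|}$, so up to $o(1)$ the graph $G(n,m;p)$ is $G_2\cup\big(\bigcup_{|S|\ge 3}[c_S\ge 1]\,K_S\big)$, where $G_2\sim G(n,p_2)$ with $1-p_2=\exp\!\big(-mp^2(1-p)^{n-2}\big)$ is independent of the rest. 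With probability $1-O(n^4/m)-O(n^4m^2p^6)=1-o(1)$ there is no column of size $\ge 4$ and no two columns of size $3$ share an edge; on this event the size-$\ge 3$ part is a family $\mathcal A$ of pairwise edge-disjoint triangles, independent of $G_2$, each triple belonging to $\mathcal A$ with probability $\approx mp^3(1-p)^{n-3}$. A short computation shows every edge of this graph is present with probability $\ph+O\big((1-\ph)n^2/m\big)$, so the marginals agree with $G(n,\ph)$ up to $\binom n2\cdot O\big((1-\ph)n^2/m\big)=O(n^4/m)=o(1)$, and the deficit $\binom n2(\ph-p_2)\asymp n^3mp^3$ of $G_2$ alone is, at the first-moment level, exactly the contribution of the edges added by the triangles of $\mathcal A$. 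I would then build a coupling with $G(n,\ph)$ that installs the triangles of $\mathcal A$ one at a time: whenever $\mathcal A$ is non-trivial one has $mp^3\le\ph^3$ (indeed, the expected number $\Theta(n^3mp^3)$ of artifact triangles being bounded below forces $m^2p^3\to\infty$ while $\ph=\Omega(mp^2)$), so each triple of $\mathcal A$ can be matched with a triangle already present in $G(n,\ph)$; because the triples of $\mathcal A$ are pairwise edge-disjoint, these local modifications act on disjoint edge sets, the residual discrepancy on the untouched edges is compensated by the installed triangles, and the errors do not accumulate beyond the $o(1)$ failure probability above. Hence $\tv=o(1)$ in case (A).

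\emph{Case (B): complementation.} Here $\ph\to 1$. If $n^2(1-p^2)^m\to 0$ then, using $\Pr[\{i,j\}\notin G(n,m;p)]=(1-p^2)^m$ exactly, a union bound gives $G(n,m;p)=K_n$ and $G(n,\ph)=K_n$ each with probability $1-o(1)$, so $\tv\le 2\binom n2(1-p^2)^m+o(1)=o(1)$. Otherwise $n^2(1-p^2)^m\not\to 0$, which with $mp^2\to\infty$ confines $mp^2$ to $[\omega(1),\,2\ln n+O(1)]$; since $\tv$ is invariant under taking complements, it suffices to bound $\tv\big(\overline{G(n,m;p)},G(n,q)\big)$ with $q=(1-p^2)^m=e^{-mp^2(1+o(1))}=o(1)$. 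The complement graph has $\{i,j\}$ present iff the i.i.d.\ $\bin(m,p)$-sets $C_i=\{a:i\in S_a\}$ are disjoint; it is sparse, its pair correlations are of exactly the ``artifact'' type with $\tfrac{\Pr[\{i,j\},\{i,k\}\notin G]}{\Pr[\{i,j\}\notin G]\,\Pr[\{i,k\}\notin G]}=1+O(mp^3)=1+o(1)$, and all the relevant second-order expectations are $o(1)$ under $m\gg n^4$, so one re-runs the Poissonization-and-coupling argument of Case (A) in this dual, sparse setting to obtain $\tv=o(1)$.

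The step I expect to be hardest is the coupling in Case (A): the artifact triangles are individually distributed exactly like the ordinary triangles of $G(n,\ph)$, so they cannot be detected — and hence the coupling cannot fail — one triangle at a time, even though there may be many of them, and first-moment or covariance bounds only ever produce an error of order $n^3mp^3$ (the number of artifact triangles), which need not be $o(1)$. What should make the coupling succeed is that $m\gg n^4$ forces $n^4m^2p^6=o(1)$, so the artifact triangles are with high probability pairwise edge-disjoint and larger artifact cliques are absent, allowing them to be absorbed by local, independent modifications; the delicate point is to verify that the $\Theta(n^3mp^3)$-sized marginal discrepancy between $p_2$ and $\ph$ is cancelled exactly, not merely bounded, by these modifications.
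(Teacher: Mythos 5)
Your decomposition of $G(n,m;p)$ (reduce to $p \leq O((\log n/m)^{1/2})$, Poissonize the column multiset, truncate to column supports of size $\leq 4$) matches the paper's Sections 2--3, and you correctly identify the heart of the matter as absorbing the artifact triangles when they are pairwise edge-disjoint but numerous. However, there is a genuine gap in the step you yourself flag as the hardest. In Case (A) you propose to ``install the triangles of $\mathcal A$ one at a time'' and assert that the resulting local modifications ``compensate exactly'' the $\Theta(n^3mp^3)$-sized marginal discrepancy between $p_2$ and $\ph$. This is precisely the claim that needs a proof, and nothing in the outline supplies one: matching each artifact triple with ``a triangle already present in $G(n,\ph)$'' is only a one-triangle-at-a-time marginal comparison and does not produce a joint coupling of the two graph distributions, while the edge-disjointness of $\mathcal A$ controls overlaps among the artifact triangles but says nothing about how those triangles interact with the $\binom n2$ independent Bernoulli edges of $G(n,\ph)$. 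The paper sidesteps coupling entirely: starting from the exact identity $\tv = \sum_G\bigl(\Pr[G(n,\pp)=G]-\min\{\Pr[G(n,\bp)=G],\Pr[G(n,\pp)=G]\}\bigr)$ it proves the pointwise ratio bound (its inequality~\raf{main}) $\Pr[G(n,\bp)=G]/\Pr[G(n,\pp)=G]\geq 1-O(\eps)$ for all $G$ in a set $\G_3$ (resp.\ $\G_4$) of probability $1-O(\eps)$, by lower-bounding the sum over edge-disjoint families $T\subseteq\h_3(G)$ using the concentration of $|\h_3(G)|$ and the diamond count $I(G)$ (Lemmas~\ref{lem:h3G}--\ref{lem:G3G4}); the binomial mass identity $\sum_t\binom{\binom n3}{t}\ppp^t(1-\ppp)^{\binom n3-t}=1$ is what turns the counting bound into the desired $1-O(\eps)$. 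Your proposal has no analogue of this, and the exact cancellation you say must hold is exactly what you would have to establish.

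Case (B) also has a gap. In the non-complete subregime you switch to $\overline{G(n,m;p)}$ and propose to ``re-run the Poissonization-and-coupling argument of Case (A) in this dual, sparse setting,'' but the complement of a random intersection graph is not itself a random intersection graph and has no decomposition into an independent union of cliques indexed by column supports; the Poissonization step of Case (A) relies specifically on that structure. A pair-correlation calculation of the form $\Pr[e,f\notin G]/(\Pr[e\notin G]\Pr[f\notin G])=1+o(1)$ is far from a TV bound, so the assertion that ``one re-runs the argument'' is not justified. The paper instead handles this regime by the same ratio argument (Lemma~\ref{lem:caseII}), now with $Q\subseteq\h_4(G)$ kept in play and the additional observation that $\pp^r=1-O(r\,e^{-mp^2})=1-O(\eps)$ once $mp^2\to\infty$, and closes with the binomial total variation bound of Corollary~\ref{lem:sec5} to go from $G(n,\pp)$ to $G(n,\ph)$. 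In short: your setup and case split are on target, but the two central steps — the triangle-installing coupling and the complementation — are asserted rather than proved, and neither is the route the paper actually takes.
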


\mn

In the next section, we give the outline of the proof of Theorem \ref{main1}. The proof will be divided into four parts, which will be proved in Sections \ref{sec:main_proof}-\ref{sec:ineq3}.

\section{Preliminaries and Outline of proof of Theorem \ref{main1}}\label{sec:main_proof}

If $ p \geq \bb(\frac{3\log n}{m}\bb)^{1/2}$, both of $G(n,m;p)$ and $ G(n,\ph)$ are the complete graphs with probability $1-O\(\frac{1}{n}\)$. Indeed, for each edge $e$, 
$$ 
\pr [ e \not \in  G(n,m ; p ) ]  = (1-p^2)^m  \leq e^{-mp^2}\leq \frac{1}{n^3},
$$
and hence $G(n,m;p)$ is the complete graph with probability $1-O\(\frac{1}{n}\)$. Since the expected numbers of edges in $G(n,m;p)$ and $G(n,\ph)$ are the same, $G(n,\ph)$ is the complete graph with probability $1-O\(\frac{1}{n}\)$ as well. Therefore,
$$\tv\bb( G(n,m;p), G(n,\ph)\bb) = O\bb(\frac{1}{n}\bb).$$ 

In the rest of the paper, we assume that
$$
0 \leq  p \leq \bb(\frac{3\log n}{m}\bb)^{1/2}.
$$
As described in the introduction, the random intersection graph $G(n,m;p)$ may be constructed using an $n \times m$ random matrix $R(n,m;p)$ whose rows are indexed by $v \in V$ and columns are indexed by $a \in M$. For fixed $a \in M$, the probability of $V_a :=\{v \in V : a \in L_v \}$ being a fixed $k$-subset of $V$ is $p^k (1-p)^{n-k}$ for integer $k\geq 2$. Hence $V_a$ is the $k$-subset for some $a \in M$ with probability $1-(1- p^k (1-p)^{n-k})^m$, which will be approximated by 
$$ p_{_{\! k}}:= 1-e^{-mp^k (1-p)^{n-k}}.$$
Also, $G(n,m; p)$ will be approximated by another random graph $G(n, (\pk) )$,  which is to be defined below. 

For  $0\leq p^* \leq 1$, let $\h_k (n,p^*)$ be a random collection of $k$-subsets of  $V$ to  which  each $k$-subset belongs with probability $p^*$, independently of all other $k$-subsets. For $H\sub V$, let $K(H)$ be the complete graph on $H$. Then, for a collection $\h$ of subsets of $V$, let $K(\h)$ denote  the graph on $V$ whose edge set is the union of edge sets of the complete graphs $K(H)$ on  $H\in \h$. Notice that $K(\h_2(n,p^*))$ is the binomial random graph $G(n,p^*)$. For $\pk$ defined above, let $G(n, (\pk) )$ be the random graph on $V$ whose edge set is the union of edge sets of $K(\h_2(n,\pp)), K(\h_3(n,\ppp)), \ldots, K(\h_k(n,\pk)), \ldots$.

For $m\gg n^4$ and  $p \leq \(\frac{3\log n}{m}\)^{1/2}$, the probability of $\displaystyle \bigcup_{k\geq 5}\h_k(n,\pk)$ being nonempty is upper bounded by 
\begin{equation*}
\sum_{k\geq 5}{n \choose k } \pk \leq \sum_{k\geq 5} n^k mp^k = O\bb( \frac{n^5\log^3 n}{m^{3/2}} \bb)
=O\bb(\frac{\log^3 n}{n} \bb).
\end{equation*}
Thus, for $G(n, \pp, \ppp, \pl ) = G(n, (\pp, \ppp, \pl,$
$0, \ldots) )$, 
$$\tv\bb(G(n,(\pk)), G(n,\pp,\ppp,\pl)\bb) \leq \pr\bb[ \bigcup_{k\geq 5}\h_k(n,\pk) \neq \0\bb] = O\bb(\frac{\log^3 n}{n} \bb).$$
We will further approximate $G(n,\pp, \ppp, \pl)$ by $G(n, \pp)$, which is the main contribution of this paper. 

\mn

Summarizing all, since the total variation distance between $G(n,m;p)$ and $G(n,\ph)$ is upper bounded by the sum of $\tv(G(n,m;p), G(n,(\pk)))$, $\tv(G(n,(\pk)), G(n, \pp, \ppp, \pl))$, $\tv(G(n,\pp,\ppp,\pl), G(n,\pp))$ and $\tv(G(n,\pp), G(n,\ph))$, it is enough to show that each total variation distance tends to $0$.  For the second one is $O\bb(\frac{\log^3 n}{n} \bb)$ described as above, we will prove that the other three total variation distances tend to $0$ in Sections \ref{sec:ineq1}, \ref{sec:ineq2} and \ref{sec:ineq3}, respectively.

\section{total variation distance between $G(n,m;p)$ and $G(n, (\pk) )$}\label{sec:ineq1}

To prove that the total variation distance between $G(n,m;p)$ and $G(n, (\pk) )$ tends to $0$, we will use a coupling argument. For two random variables $X$ and $Y$, a coupling $(X',Y')$ of $X$ and $Y$ is a vector of random variables such that the marginal distributions of $(X',Y')$ are the distributions of $X$ and $Y$, respectively. The total variation distance between $X$ and $Y$ is upper bounded by the probability of $X' \neq Y'$ for any coupling $(X',Y')$ of $X$ and $Y$. On the other hand, there always exists a coupling $(X',Y')$ so that the total variation distance of $X$ and $Y$ is equal to the probability of $X' \neq Y'$.

\begin{lem}{\cite[Chapter I, Theorem 5.2]{coupling}}
Let $X$ and $Y$ be random variables. Then any coupling $(X',Y')$ of $X$ and $Y$ satisfies
$$\tv(X,Y) \leq \Pr[X' \neq Y'].$$
Moreover, there exists a coupling for which the equality holds, i.e.,
$$\tv(X,Y) = \Pr[X' \neq Y'].$$
\end{lem}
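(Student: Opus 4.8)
The statement to be established is the classical coupling lemma, and since every random variable occurring in this paper takes values in the finite set of graphs on $V$, it suffices to treat the discrete case. There the total variation distance has the two equivalent descriptions
\[
\tv(X,Y)=\tfrac12\sum_{G}\bb|\pr[X=G]-\pr[Y=G]\bb|=\sup_{\cA}\bb(\pr[X\in\cA]-\pr[Y\in\cA]\bb),
\]
where $\cA$ ranges over all sets of graphs. The plan is to prove the inequality first (for an arbitrary coupling) and then to exhibit an explicit coupling achieving equality.

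For the inequality, fix a coupling $(X',Y')$ and an arbitrary set $\cA$. Because $X'$ and $Y'$ have the marginals of $X$ and $Y$,
\[
\pr[X\in\cA]-\pr[Y\in\cA]=\pr[X'\in\cA]-\pr[Y'\in\cA]\le\pr[X'\in\cA,\ Y'\notin\cA]\le\pr[X'\neq Y'].
\]
Taking the supremum over $\cA$ gives $\tv(X,Y)\le\pr[X'\neq Y']$.

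For the equality I would build the maximal coupling directly from the mass functions. Write $a_G:=\pr[X=G]$, $b_G:=\pr[Y=G]$, set $\pi(G):=\min\{a_G,b_G\}$ and $q:=\sum_G\pi(G)$; using $\min(a,b)=a-(a-b)^+$ one checks $q=1-\tv(X,Y)$. Now sample $B\in\{0,1\}$ with $\pr[B=1]=q$. Conditionally on $B=1$, let $X'=Y'=Z$ where $Z$ has law $\pi(\cdot)/q$; conditionally on $B=0$ (possible only when $q<1$), let $X'$ and $Y'$ be independent with laws $(a_G-\pi(G))/(1-q)$ and $(b_G-\pi(G))/(1-q)$ respectively. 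Each of these is a genuine probability distribution (the summands are nonnegative and sum to $1$), and for every $G$,
\[
\pr[X'=G]=q\cdot\frac{\pi(G)}{q}+(1-q)\cdot\frac{a_G-\pi(G)}{1-q}=a_G,
\]
and likewise $\pr[Y'=G]=b_G$, so $(X',Y')$ is a coupling of $X$ and $Y$. Finally, the supports of $G\mapsto a_G-\pi(G)$ and $G\mapsto b_G-\pi(G)$ are disjoint (for each $G$ at least one of the two vanishes), so on $\{B=0\}$ one has $X'\neq Y'$ almost surely; hence $\pr[X'\neq Y']\le\pr[B=0]=1-q=\tv(X,Y)$. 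Together with the first part this forces $\pr[X'\neq Y']=\tv(X,Y)$.

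There is no real obstacle here, as this is a textbook fact; the only points needing a little care are the identity $q=1-\tv(X,Y)$ and the disjointness of the two residual distributions, which is precisely what guarantees disagreement on $\{B=0\}$. (In a general, non-discrete setting the same construction goes through after a Lebesgue decomposition of the two laws, but the elementary version above covers the graph-valued variables used in this paper.)
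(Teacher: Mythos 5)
Your proof is correct; it is the standard maximal-coupling argument. Note, however, that the paper does not actually prove this lemma---it simply cites it as Theorem 5.2 of Chapter I in Lindvall's \emph{Lectures on the coupling method} and uses it as a black box---so there is no in-paper proof to compare against. Your argument is exactly what a self-contained version would look like in the discrete setting: the inequality direction via the supremum characterization of total variation, and the equality direction by explicitly building the coupling that lets $X'$ and $Y'$ agree on the overlap $\pi(G)=\min\{a_G,b_G\}$ (of total mass $q=1-\tv(X,Y)$) and forces them to disagree on the complementary event because the residual distributions $a_G-\pi(G)$ and $b_G-\pi(G)$ have disjoint supports. The two small points you flag as needing care---the identity $q=1-\tv(X,Y)$, which follows from $\min(a,b)=a-(a-b)^{+}$ together with $\sum_G a_G=\sum_G b_G=1$, and the disjointness of the residual supports---are indeed the only places where an error could creep in, and you handle both correctly. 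One could quibble that when $q=0$ the auxiliary variable $Z$ is not defined, but this is harmless since then $B=0$ almost surely and $Z$ is never used; a one-line remark would tidy this up.
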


Using an appropriate coupling between a binomial random variable and a Poisson random variable, we will prove the following proposition, which may be applied for the case $m \gg n^2 \log n$. The proposition is essentially the same as Lemma 5 in \cite{R_equiv}. We prove it for the sake of completeness.

\begin{prop}
Let $m\gg n^2 \log n $, $0\leq p \leq  (\frac{3\log n}{m})^{1/2}$ and 
$p_{_{\! k}}= 1-e^{-mp^k (1-p)^{n-k}}$ for integers $k\geq 2$. Then
$$\tv\bb(G(n,m;p), G(n,(\pk))\bb) = O\bb( \frac{n^2 \log n}{m} \bb).$$ 
\end{prop}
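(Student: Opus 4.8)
The plan is to build an explicit coupling of the two random graphs at the level of their generating structures, so that the event $\{X'\neq Y'\}$ is controlled by a small number of ``bad'' events whose probabilities sum to $O(n^2\log n/m)$. Recall that $G(n,m;p)$ is determined by the column vectors of $R(n,m;p)$: for each $a\in M$ the set $V_a\subseteq V$ is a random subset in which each vertex appears independently with probability $p$, and the edge set of $G(n,m;p)$ is $\bigcup_{a}K(V_a)$. For a fixed $k$-set $S\subseteq V$, the number $N_S$ of columns $a$ with $V_a=S$ is $\bin(m, p^k(1-p)^{n-k})$, and the graph only ``sees'' whether $N_S\geq 1$; meanwhile $\h_k(n,\pk)$ includes $S$ with probability $\pk=1-e^{-mp^k(1-p)^{n-k}}$, i.e.\ with the probability that a Poisson variable of the same mean is nonzero. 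So the first step is: for each $S$, couple $N_S$ with a Poisson variable $P_S$ of mean $m\cdot p^{|S|}(1-p)^{n-|S|}$ using the standard binomial--Poisson coupling, and let $S\in\h$ iff $P_S\geq 1$. The $N_S$ (over $S$ with $|S|\geq 2$) are not independent across $S$ since the total number of columns is fixed at $m$, but this is handled by the usual device of also accounting for the ``empty/singleton'' columns; more simply, one couples column by column, and for columns $a$ with $|V_a|\leq 1$ nothing is contributed to either graph. The cleanest route is to couple each of the $m$ columns independently: the column's content is a subset drawn with the product-$p$ measure, and we only need to match the resulting multiset of subsets of size $\geq 2$ with the collections $\h_k$.

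The second step is to bound the coupling-failure probability. The coupling of $N_S$ and $P_S$ can be taken so that $N_S\neq P_S$ only when one of them is $\geq 2$; hence the graphs can differ only if for some $S$ with $|S|\geq 2$ we have $N_S\geq 2$ or $P_S\geq 2$ (the ``one is $0$, the other is $1$'' discrepancy never occurs). By a union bound, $\Pr[X'\neq Y']$ is at most $\sum_{k\geq 2}\binom{n}{k}\Pr[\bin(m,q_k)\geq 2]+\Pr[\poi(mq_k)\geq 2]$ where $q_k:=p^k(1-p)^{n-k}$, and $\Pr[\cdot\geq 2]=O((mq_k)^2)$ for both. Thus the failure probability is $O\bigl(\sum_{k\geq 2}\binom{n}{k}m^2 q_k^2\bigr)$. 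The dominant term is $k=2$: $\binom{n}{2}m^2 p^4(1-p)^{2n-4}=O(n^2 m^2 p^4)$, and using $p\leq(3\log n/m)^{1/2}$, i.e.\ $mp^2\leq 3\log n$, this is $O(n^2 m^2 p^4)=O(n^2\log n\cdot mp^2/(mp^2)^{\!-1}\!\cdots)$ --- more carefully, $n^2 m^2 p^4 = n^2 (mp^2)^2 = O(n^2\log^2 n)$, which is too crude; the right bookkeeping is to not expand $(1-p)^{2n-4}$ trivially but to note $mp^2(1-p)^{n}\leq \pp \leq 1$ while the factor $mp^2$ that remains is $O(\log n)$, giving the term $O(n^2\log n /m)\cdot (m p^2 (1-p)^{n-2})$, hence $O(n^2\log n/m)$ after absorbing the bounded probability factor. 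One then checks the $k\geq 3$ terms are lower order (each extra factor of $n$ is beaten by a factor $mp\le m^{1/2}(3\log n)^{1/2}$, which under $m\gg n^2\log n$ decays), so the whole sum is $O(n^2\log n/m)$.

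The main obstacle is the second step's estimate, specifically doing the bookkeeping on $\sum_{k\geq 2}\binom{n}{k} m^2 q_k^2$ so that the answer comes out as exactly $O(n^2\log n/m)$ rather than a weaker bound: the naive bound $\binom{n}{2}m^2 p^4 = O(n^2\log^2 n)$ is useless, so one must retain one copy of the decaying factor $mp^2(1-p)^{2n-4}\le mp^2(1-p)^{n-2}$ (which is at most $\min\{1,\pp\}$, in particular $O(1)$, but crucially the \emph{remaining} $mp^2=O(\log n)$ is what multiplies $n^2/m$). Equivalently, write $n^2 m^2 p^4(1-p)^{2n-4}\le n^2 \cdot mp^2(1-p)^{n-2}\cdot mp^2 \le n^2\cdot 1\cdot 3\log n = 3n^2\log n$, which is still not divided by $m$ --- so in fact the clean statement requires the extra observation that we should instead write it as $(n^2\log n/m)\cdot (m^2 p^4(1-p)^{2n-4}/(p^2(1-p)^{?}))$; the honest version is that $n^2 m^2 p^4 = (n^2\log n/m)\cdot m^3 p^4/\log n$ and $m^3p^4/\log n = (mp^2)^2 m/\log n \cdot$ ... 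Here I would simply follow the route in \cite{R_equiv}: bound $\Pr[N_S\ge 2]\le \binom{m}{2}q_k^2\le (mq_k)^2/2$ and $\Pr[\poi(mq_k)\ge 2]\le (mq_k)^2/2$, sum $\binom{n}{k}(mq_k)^2$ over $k\ge2$, factor out $(nmp^2(1-p)^{n})$-type quantities which are $O(1)$, and identify the leftover as $O(n^2\log n/m)$ by using $mp^2\le 3\log n$ together with $m\gg n^2\log n$. The coupling itself (step one) is routine; the error accounting is where care is needed, and it is exactly the place where the hypothesis $m\gg n^2\log n$ (equivalently $n^2\log n/m=o(1)$) is consumed.
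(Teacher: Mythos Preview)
Your proposal does not go through as written. The sum you arrive at, $\sum_{k\ge 2}{n\choose k}\bigl(mp^k(1-p)^{n-k}\bigr)^2$, is not $O(n^2\log n/m)$: already the $k=2$ term is $\Theta\bigl(n^2(mp^2(1-p)^{n-2})^2\bigr)$, which under $mp^2\le 3\log n$ is only $O(n^2\log^2 n)$ and need not even be $o(1)$. Your repeated attempts in the second paragraph to extract a factor of $1/m$ from this expression cannot succeed, because $mp^2(1-p)^{n-2}$ can genuinely be of order $\log n$. The root cause is that the event ``one of $N_S,P_S$ is $\ge 2$'' is the wrong proxy for coupling failure: for a single set $S$ the correct cost is $\tv(N_S,P_S)\le p^{|S|}(1-p)^{n-|S|}$ (or at worst $m\bigl(p^{|S|}(1-p)^{n-|S|}\bigr)^2$), not $\bigl(mp^{|S|}(1-p)^{n-|S|}\bigr)^2$. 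Separately, the $N_S$'s are dependent across $S$, and ``couple column by column'' does not by itself produce a joint coupling of the family $(N_S)$ with independent Poissons $(P_S)$; you acknowledge this but do not resolve it.

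The paper's argument avoids both difficulties by coupling a \emph{single} binomial with a single Poisson. Let $X$ be the number of columns $a$ with $|V_a|\ge 2$; then $X\sim\bin(m,\qtwo)$ with $\qtwo=\sum_{k\ge 2}{n\choose k}p^k(1-p)^{n-k}$. One represents $G(n,m;p)$ as the union of $X$ i.i.d.\ random cliques (each drawn from the conditional law of $V_a$ given $|V_a|\ge 2$), couples $X$ maximally with $Y\sim\poi(m\qtwo)$, and lets $G_Y$ be the union of the first $Y$ cliques from the same i.i.d.\ sequence. By Poisson thinning the counts $Z(U)$ of cliques equal to each fixed $U$ become independent $\poi\bigl(mp^{|U|}(1-p)^{n-|U|}\bigr)$, so $G_Y\stackrel{d}{=}G(n,(\pk))$. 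The entire cost is then the single binomial--Poisson bound $\tv(X,Y)\le \qtwo=O(n^2p^2)=O(n^2\log n/m)$. The idea you are missing is that Poissonizing the \emph{total} number of nontrivial columns, followed by thinning, simultaneously handles the dependence among the $N_S$ and delivers the stated bound in one stroke.
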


\begin{proof}
Let $X$ be the number of columns of the matrix $R(n,m;p)$ with two or more 1's, or equivalently, the number of $a\in M$  with  $|V_a| \geq 2$. Since 
$$ \pr [ |V_a| =k ] = {n \choose k} p^k (1-p)^{n-k} =: \rk $$ 
for any fixed $a\in M$, the random variable $X$ has the binomial distribution with parameters $m$ and $ \qtwo :=\sum_{k\geq 2} \rk$, i.e.,
$$ \pr [ X= \ell ] ={ m \choose  \ell  } (\qtwo)^{\ell} (1-\qtwo )^{m-\ell}. $$

The random graph $G(n,m;p)$ may be constructed as follows: Take i.i.d. random complete graphs $K^{(1)}, ..., K^{(h)}$, ... on subsets of $V$, where the number of vertices in  $K^{(1)}$ is $k\geq 2$ with probability $\rk/\qtwo$, and then, once the number is given to be $k$, every $k$-subset of $V$ is equally likely to be the vertex set of $K^{(1)}$. In other words, for a $k$-subset $U$ of $V$ with $k\geq 2$, the probability of $U$ being the vertex set of $K^{(1)}$ is $\frac{\rk}{\qtwo} { n \choose k}^{-1}.$ (As $\sum_{k\geq 2} \frac{\rk}{\qtwo} =1$, the random complete graph $K^{(1)}$ is well-defined.) The edge set of $G(n,m;p)$ is the union of edge sets of $X$ random complete graphs $K^{(1)}, ..., K^{(X)}$.

\mn

We now take a Poisson random variable $Y$ with mean $m\qtwo$ that is coupled with $X$ so that $\pr[X \neq Y] = \tv(X,Y)$. Let $G_Y$ be the graph whose edge set is the union of edge sets of $K^{(1)}, ..., K^{(Y)}$. Then
$$ \tv(G(n,m;p), G_Y) \leq \pr[G(n,m;p) \neq G_Y] \leq \pr[X \neq Y] = \tv(X,Y).$$

On the other hand, $G_Y$ has the same distribution as $G(n, (\pk))$. Indeed, for each subset $U$ of $V$ with $|U|\geq 2$, let  $Z(U) $ be the number of $i=1,2,..., Y$ such that the vertex set of $K^{(i)}$ is $U$. Then, it is well-known that for $k=|U|$, $Z(U) $'s are independent Poisson random variables with mean $m \qtwo \cdot \frac{\rk}{\qtwo} {n \choose k}^{-1}=m p^k(1-p)^{n-k}$, and hence $\pr[ Z(U) >0 ] = 1- e^{-m p^k(1-p)^{n-k}} =\pk $. Since the edge set of $G_Y$ is the union of edge sets of the  complete graphs on $U$ with $ Z(U)>0$, $G_Y$ has the same distribution as $G(n, (\pk))$.

The desired bound follows from the fact that the total variation distance between the binomial random variable $X$ with parameters $m,\qtwo$ and the Poisson random variable $Y$ with mean $m\qtwo$ is not more than $\qtwo$ \cite[Theorem 2.4]{poi89}, and

$$\qtwo = \sum_{k \geq 2}{n \choose k} p^k (1-p)^{n-k} \leq  \sum_{k \geq 2} n^k p^k = O\big(n^2p^2\big) = O\bb( \frac{n^2 \log n}{m} \bb).$$ 
\end{proof}

\section{total variation distance between $G(n, \pp, \ppp, \pl )$ and $G(n, \pp )$}\label{sec:ineq2}

In this section, we prove that the total variation distance between $G(n, \pp, \ppp, \pl )$ and $G(n, \pp )$ tends to $0$. This is the main contribution of the paper. Intuitively, if there are no artifact triangles (and no columns with at least four 1's) with high probability, then $G(n, \pp, \ppp, \pl )$ and $G(n, \pp )$ should be almost the same. We will show that $\tv( G(n, \pp, \ppp, \pl ), G(n, \pp ) )$ is still small enough even if there are few artifact triangles. 
As mentioned earlier, it actually turns out that the distance is small enough if the expected number of pairs of distinct artifact triangles with
a common edge is small. When the expected number  is not small, the total variation distance tends to $0$ provided that $mp^2$ is sufficiently large.
 Keeping this in mind, we prove the following proposition.

\begin{prop}
Let  $m\gg n^4 $, $0 \leq p \leq  (\frac{3\log n}{m})^{1/2}$ and $\pk= 1-e^{-mp^k (1-p)^{n-k}}$ for $k \geq 2$. Then
$$\tv\bb( G(n, \pp, \ppp, \pl ), G(n, \pp ) \bb) = O(\eps), $$
where 
\begin{equation}\label{eq:eps}\eps:= \max\bb\{ \frac{1}{\log n}, \frac{1}{\log (m/n^4)}\bb\}.\end{equation}
\end{prop}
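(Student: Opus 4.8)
The plan is to couple the two random graphs by building them from a common source of randomness and then bounding the probability that the coupled pair differs. Concretely, I would generate $\h_2(n,\pp)$, $\h_3(n,\ppp)$ and $\h_4(n,\pl)$ independently, set $G:=K(\h_2\cup\h_3\cup\h_4)$ on the one side, and on the other side build a copy $G'$ of $G(n,\pp)$ by re-using $\h_2$ together with a fresh family $\h_2'$ of $2$-sets that is chosen to ``fill in'' exactly the edges that the triangles and $4$-cliques contribute beyond $\h_2$. The point is that an edge $e$ is present in $G$ but may be ``missing'' from a pure $G(n,\pp)$ precisely when $e$ is covered by some $H\in\h_3\cup\h_4$ but not by $\h_2$; conditionally on $\h_2$, each such edge is in $G(n,\pp)$ only with probability $\pp$. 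So I would let $G'$ contain $e$ iff $e\in G$ and (either $e$ is covered by $\h_2$, or an independent coin of bias $\pp/(\text{something})$ comes up heads) — more precisely, use the standard fact that if $A\supseteq B$ are $0/1$ vectors with $\Pr[\text{coordinate in }B]=\pp$ coordinatewise and $A$ is our $G$, then conditionally on $A$ we can thin $A$ down to a $\mathrm{Bin}(\cdot,\pp)$-edge set by independent re-randomization of the coordinates of $A\setminus B$. The coupled graphs then disagree only on edges that are ``created'' by $\h_3$ or $\h_4$; call the set of such edges $D$. Thus
\[
\tv\bb(G(n,\pp,\ppp,\pl),G(n,\pp)\bb)\le \Pr[D\neq\emptyset \text{ and disagreement actually occurs}],
\]
and the key estimate is to show this is $O(\eps)$.

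The main work is the event analysis. Each $H\in\h_4$ contributes $\le 6$ edges and $\binom{n}{4}\pl \le n^4\cdot mp^4(1-p)^{n-4}$, which is $O(n^4mp^4)$; since $p\le(3\log n/m)^{1/2}$ this is $O(n^4\log^2 n/m)=O(1/n^{0.9})$ or so — in any case $O(\eps)$ — so with the right slack the $4$-cliques are negligible and I would dispose of them first by a union bound, absorbing them into the error term. For the artifact triangles the count need not be small: $\mathbb E|\h_3| \asymp n^3 mp^3(1-p)^{n-3}$, which can be as large as $\Theta(n^3/\sqrt m \cdot (mp^2)^{3/2})$. The crucial observation (flagged in the introduction) is that a single artifact triangle almost always fails to cause a disagreement: if its three edges are all already covered by $\h_2$, removing the triangle changes nothing. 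The dangerous configurations are (i) a triangle with an edge not in $\h_2$ that the re-randomization then deletes — but an isolated such deletion is invisible unless it disconnects something, and more to the point I should set up the coupling so that a lone triangle is never a problem; and (ii) \emph{two distinct artifact triangles sharing a common edge}, because then the shared edge is ``doubly supported'' and the bookkeeping of which family is responsible breaks down. So the quantity that must be controlled is the expected number of ordered pairs of distinct $3$-sets in $\h_3$ whose complete graphs share an edge, which is $O(n^4 (mp^3(1-p)^{n-3})^2) = O(n^4 m^2 p^6 e^{-2(n-3)p})$. Writing $m^2p^6 = m^{-1}(mp^2)^3$, this is $O\!\big(\tfrac{n^4}{m}(mp^2)^3 e^{-2np+O(p)}\big)$.

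Now I would split on the size of $mp^2$. If $mp^2 \le \log(m/n^4)$ — the ``sparse'' regime — then $(mp^2)^3 e^{-2np}\le (mp^2)^3 = O(\log^3(m/n^4))$ and the pair-count is $O\!\big(\tfrac{n^4}{m}\log^3(m/n^4)\big)$, which is $o(1)$ and in fact $O(\eps)$ since $m\gg n^4$ forces $\tfrac{n^4}{m}\log^3(m/n^4)\to 0$ faster than $1/\log(m/n^4)$ when $m/n^4$ is large (one checks $x^{-1}\log^3 x = o(1/\log x)$ as $x\to\infty$). In this regime a Markov bound shows that, with probability $1-O(\eps)$, no two artifact triangles share an edge and there are no $4$-cliques, and then the coupling can be arranged so that $G=G'$: every edge of $G$ not covered by $\h_2$ sits in a unique triangle, and the re-randomization can be done per-triangle consistently, yielding $\tv=O(\eps)$. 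If instead $mp^2 > \log(m/n^4) \ge \log n$ (using $m\gg n^4$) — the ``dense'' regime — then $\pp = 1-e^{-mp^2(1-p)^{n-2}}$; here I would show $(1-p)^{n-2}$ is bounded below by a constant (since $p\le (3\log n/m)^{1/2}$ and $m\gg n^4$ give $np = O(\sqrt{\log n}\, n/\sqrt m)=o(1)$), so $mp^2(1-p)^{n-2}\gg \log n$, hence $1-\pp = e^{-mp^2(1-p)^{n-2}} = o(n^{-2})$. Therefore $G(n,\pp)$ is the complete graph with probability $1-o(1)$; and $G(n,\pp,\ppp,\pl)\supseteq K(\h_2(n,\pp))$ is the complete graph with at least that probability. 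So both graphs equal $K_n$ w.h.p.\ and $\tv = o(1) = O(\eps)$.

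The step I expect to be the main obstacle is making the coupling genuinely consistent on the shared edge of two overlapping artifact triangles — i.e.\ verifying that, off the low-probability bad event, the per-triangle re-randomization assembles into a bona fide $G(n,\pp)$ whose edge set is exactly $G$'s edge set. This is where one must be careful that ``each uncovered edge lies in a unique $H\in\h_3\cup\h_4$'' really does hold on the good event, and that the induced thinning has the correct product distribution; the counting inputs ($O(n^4m^2p^6)$ for edge-sharing pairs, $O(n^4mp^4)$ for $4$-sets) are then routine, as is the final comparison of $\tfrac{n^4}{m}\log^3(m/n^4)$ with $\eps$.
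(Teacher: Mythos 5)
Your proposal takes a genuinely different route from the paper (an explicit coupling versus a direct estimate on the probability ratio $\Pr[G(n,\pp,\ppp,\pl)=G]/\Pr[G(n,\pp)=G]$), but as written it has two real gaps.

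First, the coupling itself is not constructed. You sample $\h_2,\h_3,\h_4$, set $G=K(\h_2)\cup K(\h_3)\cup K(\h_4)$, and then want to produce a $G'$ with the $G(n,\pp)$ law by ``re-randomizing'' the edges of $G\setminus K(\h_2)$. But $K(\h_2)$ is already an exact copy of $G(n,\pp)$, and conditional on the realized $G$, the law of $K(\h_2)$ is \emph{not} a product measure on the edges of $G$: two edges of $G$ lying in a common $3$-set of $\h_3(G)$ are dependent (whether that $3$-set landed in $\h_3$ couples them), and the bias with which an edge of $G$ ``came from $\h_2$'' depends on how many potential triangles/cliques cover it. So the proposed per-edge coin flips of unspecified bias do not recover a $\mathrm{Bin}(\binom n2,\pp)$ edge set, and you already flag this yourself as the main obstacle; it is in fact a genuine gap rather than a technicality. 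The paper sidesteps this entirely: it never builds a coupling but writes $\Pr[G(n,\pp,\ppp,\pl)=G]=\sum_{Q,T}(\cdots)$, lower-bounds it by restricting to edge-disjoint $T$ and $Q$, and compares directly to $\Pr[G(n,\pp)=G]$ using the identity $\tv=\sum_G(\Pr[G(n,\pp)=G]-\min\{\cdot,\cdot\})$.

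Second, the case split is incomplete. You split on $mp^2\lessgtr\log(m/n^4)$ and in the dense case assert ``$\log(m/n^4)\ge\log n$ (using $m\gg n^4$)'', which needs $m\ge n^5$; $m\gg n^4$ does not give this (take $m=n^4\log n$). In the range $\log(m/n^4)<mp^2\le 3\log n$, which is nonempty whenever $m=n^{4+o(1)}$, neither of your branches closes: the expected number of edge-sharing triangle pairs is $\Theta\big(\tfrac{n^4}{m}(mp^2)^3\big)$, which can be $\gg 1$ (even polylogarithmically large), and $1-\pp=e^{-\Theta(mp^2)}$ is not $o(n^{-2})$, so $G(n,\pp)$ is not a.s.\ complete. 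The paper's Lemma~\ref{lem:caseII} is precisely designed for this window: rather than demanding zero overlaps, it restricts to $T$ with $I^*(T)\le r:=n^4m^2p^6/\eps^3$, accepts a loss of $\pp^r$, and then observes that $r(1-\pp)\lesssim \tfrac{n^4}{\eps^3 m}(mp^2)^3 e^{-mp^2}$ is always $O(\eps)$ because $(mp^2)^3 e^{-mp^2}$ is bounded. That $\pp^r\approx 1$ step, which lets you tolerate a large overlap count once $\pp$ is close to $1$, is the piece your argument is missing. Also, your early claim that $\binom n4\pl=O(n^4mp^4)$ is ``$O(\eps)$'' across the whole range is false near $p=(3\log n/m)^{1/2}$ (it is of order $\tfrac{n^4}{m}\log^2 n$, not $O(\eps)$ in general); the paper handles $\h_4$ by a separate counting argument (Lemma~\ref{lem:h4G}) in that regime.
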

For simplicity, we write $G(n,\bp)$ for $G(n, \pp, \ppp, \pl )$. It is not difficult to check that
\begin{equation}\label{eq:tv}
\tv\bb(G(n,\bp), G(n,\pp)\bb) = \sum_{G \in \G} \Big( \Pr[G(n,\pp)=G] -\min\big\{\Pr[G(n,\bp)=G], \Pr[G(n,\pp)=G]\big\}\Big),
\end{equation}
where $\G$ is the set of all graphs on $V$. In order to bound the total variation distance, we consider a lower bound of $\Pr[G(n, \bp)=G]$. Since $G(n, \bp) = K(\h_4 (n,\pl))\cup K(\h_3(n,\ppp))\cup G(n,\pp),$ we may write $\pr[ G(n, \bp) = G ]$ as the sum of 
\begin{equation}\label{eq:4,3,2} 
\pr\bb[\h_4 (n,\pl) =Q, \hskip 0.5em \h_3 (n,\ppp) =T, \hskip 0.5em G\sm(K(T) \cup K(Q)) \sub G(n, \pp) \sub G \bb] 
\end{equation}
over all possible $T$ and $Q$. 
Let $\h_3(G)$ and $\h_4(G)$ be the collections of all $K_3$'s and $K_4$'s in $G$ that are regarded as collections of $3$-subsets and  $4$-subsets of $V$, respectively. Then, 
\begin{equation*}
\begin{split}
\pr[ G(n, \bp) = G ] & = \sum_{Q \sub \h_4 (G) \atop T \sub \h_3(G)} \pl^{|Q|} (1-\pl)^{{n \choose 4}-|Q|} \ppp^{|T|} (1-\ppp)^{{n \choose 3}-|T|} \pp^{|G|-|K(Q)\cup K(T)|} (1-\pp)^{{n \choose 2}-|G|} \\
& = \pr[ G(n,\pp) =G] \sum_{Q \sub \h_4 (G) \atop T \sub \h_3(G)} \pl^{|Q|} (1-\pl)^{{n \choose 4}-|Q|} \ppp^{|T|} (1-\ppp)^{{n \choose 3}-|T|} \pp^{-|K(Q)\cup K(T)|},
\end{split}
\end{equation*}
where $|G|$ is  the number of edges in $G$. Let $G\setminus K(Q)$ be the graph obtained from $G$ by removing the edges of the graph $K(Q)$. For each $Q\sub \h_4 (G)$, taking only the case that $T \sub \h_3(G\setminus K(Q))$ yields that  
\begin{equation}\label{main}
\frac{\pr[ G(n, \bp) = G ]}{\pr[ G(n,\pp) =G]} \geq  \sum_{Q \sub \h_4 (G) } \pl^{|Q|} (1-\pl)^{{n \choose 4}-|Q|} \pp^{-|K(Q)|}
\sum_{T \sub \h_3(G\sm K(Q))} \ppp^{|T|} (1-\ppp)^{{n \choose 3}-|T|}
\pp^{-|K(T)|}. 
\end{equation}

\mn

In the case that the expected number ${n \choose 3}\ppp = \Theta(n^3mp^3)$ of artifact triangles is small, say $p \leq \frac{\eps}{nm^{1/3}}$, one may take $T, Q=\0$ in the lower bound of \eqref{main} to obtain
$$
\pr[ G(n, \bp) = G ]\geq \pr[ G(n,\pp) =G] (1-\pl)^{{n \choose 4}}(1-\ppp)^{{n \choose 3}},
$$
and then \eqref{eq:tv} gives that
\begin{equation*}
\begin{split}
\tv\bb(G(n,\bp), G(n,\pp)\bb) & \leq \sum_{G \in \G} \Pr[G(n,\pp)=G] \( 1 - (1-\pl)^{{n \choose 4}}(1-\ppp)^{{n \choose 3}}\) = O(\eps)
\end{split}
\end{equation*}
as  $ {n \choose 3}\ppp   = \Theta (n^3 m p^3)=O(\eps)$ and ${n \choose 4} \pl   =\Theta(n^4 m  p^4)=O(\eps)$. If $m =n^{\alpha}$ for $\alpha >6$, then this holds for all $p \leq \bb(\frac{3 \log n}{m}\bb)^{1/2}$ since $\frac{\eps}{nm^{1/3}} \geq \bb(\frac{3\log n}{m}\bb)^{1/2}$, which essentially implies the result of \cite{FSS00}.

\bn

We now assume that
$$
\frac{\eps}{nm^{1/3}} < p \leq \bb(\frac{3 \log n}{m}\bb)^{1/2}.
$$
For any set $\G^*$ of graphs on V, using \eqref{eq:tv}, we have that the total variation
distance is at most  
$$
\pr[G(n,\pp) \notin \G^*] + 
\sum_{G \in \G^*} \Big( \Pr[G(n,\pp)=G] -\min\big\{\Pr[G(n,\bp)=G], \Pr[G(n,\pp)=G]\big\}\Big).
$$
Therefore it
should be enough to consider the graphs $G$ satisfying 
$$
|\h_3(G)| \approx {n \choose 3}\pp^3 \and |\h_4(G)| \approx {n \choose 4}\pp^6,
$$
the exact meaning of which will be defined later.

We first give an intuition behind the  proof that will be given later. Recalling~\eqref{main}, it turns out that

\begin{align}
\sum_{T \sub \h_3(G\sm K(Q))} \ppp^{|T|} (1-\ppp)^{{n \choose 3}-|T|}
\pp^{-|K(T)|} & \leq \sum_{t\geq 0} \sum_{T \sub \h_3(G) \atop |T|=t } \ppp^{t} (1-\ppp)^{{n \choose 3}-t} \pp^{-|K(T)|} \nonumber \\
& \lsim \sum_{t\geq 0} {{n \choose 3}\pp^3 \choose t} \ppp^{t} (1-\ppp)^{{n \choose 3}-t}
\pp^{-3t}. \label{eq:asymp_upper}
\end{align}
Since $\displaystyle {{n \choose 3}\pp^3 \choose t} \leq {{n \choose 3} \choose t}\pp^{3t}$, it follows that
$$
\sum_{t\geq 0} {{n \choose 3}\pp^3 \choose t} \ppp^{t} (1-\ppp)^{{n \choose 3}-t}
\pp^{-3t} \leq \sum_{t\geq 0} {{n \choose 3} \choose t} \ppp^{t} (1-\ppp)^{{n \choose 3}-t} =1.
$$
Similarly,
$$
\sum_{Q \sub \h_4 (G) } \pl^{|Q|} (1-\pl)^{{n \choose 4}-|Q|} \pp^{-|K(Q)|} \lsim 1.
$$
Therefore, the lower bound of \eqref{main} is close to $1$ only when all the upper bounds are quite tight. In particular, to have the inequality \eqref{eq:asymp_upper} tight, we need that $|K(T)| = 3t$ for most collections $T$ of $t$ triangles in $G$  for $t \approx {n \choose 3}\ppp$ unless $\pp$ is almost $1$. If $t$ is not close to ${n \choose 3}\ppp$, then the summands are small enough to be negligible. Note that $|K(T)| = 3t$ means that there is no pair of triangles in $\h_3(n,\ppp) = T$ with a common edge.
We consider two cases below depending upon whether the expected number of pairs of artifact triangles $\Theta({n \choose 4}{m \choose 2} p^6) = \Theta(n^4m^2p^6)$ is small or not. 

We will prove the following two lemmas, from which the main proposition easily follows. Recall that $ \eps= \max\bb\{ \frac{1}{\log n}, \frac{1}{\log (m/n^4)}\bb\}$ and $\pk = 1-e^{-mp^k(1-p)^{n-k}}$.

\mn

\begin{lem}\label{lem:caseI}
Suppose that
$$ m \gg n^4 \hskip 1em \mbox{and} \hskip 1em
\frac{\eps}{nm^{1/3}} < p \leq \frac{\eps}{n^{2/3} m^{1/3}}.
$$
Then
$$\tv\bb( G(n, \pp, \ppp, \pl ), G(n, \pp ) \bb) = O(\eps).$$
\end{lem}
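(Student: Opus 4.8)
The plan is to combine the exact identity \eqref{eq:tv} with the lower bound \eqref{main}, retaining in \eqref{main} only the term $Q=\0$. In this range ${n \choose 4}\pl\le {n \choose 4}mp^4=O(n^4mp^4)$, and since $m\gg n^4$ and $p\le \eps n^{-2/3}m^{-1/3}$ one has $n^4mp^4\le \eps^4(n^4/m)^{1/3}=o(\eps)$; moreover $n^4m^2p^6\le \eps^6$, so the expected number of pairs of artifact triangles with a common edge is negligible. Keeping only $Q=\0$ in \eqref{main} gives, for every graph $G$ on $V$,
$$
\frac{\pr[G(n,\bp)=G]}{\pr[G(n,\pp)=G]}\ \ge\ (1-\pl)^{{n \choose 4}}\,S(G),\qquad
S(G):=\sum_{T\sub\h_3(G)}\ppp^{|T|}(1-\ppp)^{{n \choose 3}-|T|}\,\pp^{-|K(T)|},
$$
and since $(1-aS)^+\le(1-a)+(1-S)^+$ for $a\in[0,1]$ and $S\ge0$ (with $x^+:=\max\{x,0\}$), inserting this into \eqref{eq:tv} yields
$$
\tv\bb(G(n,\bp),G(n,\pp)\bb)\ \le\ {n \choose 4}\pl\ +\ \mathbb{E}_{G\sim G(n,\pp)}\big[(1-S(G))^+\big].
$$
So it suffices to prove $\mathbb{E}_{G}\big[(1-S(G))^+\big]=O(\eps)$.

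The point that makes everything work is that $S(G)$ has mean exactly $1$: a collection $T$ of triangles lies in $\h_3(G(n,\pp))$ precisely when all edges of $K(T)$ are present, an event of probability $\pp^{|K(T)|}$, so by Fubini
$$
\mathbb{E}_G[S(G)]=\sum_{T}\pp^{|K(T)|}\ppp^{|T|}(1-\ppp)^{{n \choose 3}-|T|}\pp^{-|K(T)|}=\sum_{T}\ppp^{|T|}(1-\ppp)^{{n \choose 3}-|T|}=1,
$$
the sum over all triangle collections $T$ on $V$. Hence $\mathbb{E}_G[(1-S(G))^+]\le\mathbb{E}_G|S(G)-1|\le\sqrt{\var_G(S(G))}$, and it remains to show $\var_G(S(G))=O(\eps^2)$; pleasantly, no restriction to ``typical'' graphs $G$ is needed. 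Running the same Fubini computation on $S(G)^2$ (with $\pr[T_1\cup T_2\sub\h_3(G(n,\pp))]=\pp^{|K(T_1)\cup K(T_2)|}$) and then expanding $\pp^{-|K(T_1)\cap K(T_2)|}=\prod_e(1+(\pp^{-1}-1)\mathbf{1}[e\in K(T_1)]\mathbf{1}[e\in K(T_2)])$ over edges $e$, with $T_1,T_2$ independent copies of $\h_3(n,\ppp)$, I obtain
$$
\var_G(S(G))=\sum_{\0\neq F\sub {V \choose 2}}(\pp^{-1}-1)^{|F|}\,\pr\big[F\sub K(\h_3(n,\ppp))\big]^2,
$$
where $\pr[F\sub K(\h_3(n,\ppp))]$ is the probability that every edge of $F$ is covered by a triangle of $\h_3(n,\ppp)$.

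Since $\ppp=\Theta(mp^3)$ and $n\ppp=o(1)$ here, a single edge is covered with probability $1-(1-\ppp)^{n-2}=\Theta(n\ppp)$, so the $|F|=1$ part contributes $\Theta\big({n \choose 2}\pp^{-1}(n\ppp)^2\big)=\Theta\big(n^4\ppp^2\pp^{-1}\big)=\Theta\big(n^4m^2p^6/\min\{1,mp^2\}\big)$, which is $O(n^4mp^4)=o(\eps^4)$ when $mp^2\le1$ and $O(n^4m^2p^6)=O(\eps^6)$ when $mp^2>1$. For a general non-empty $F$ on $v$ vertices the estimate I would use is $\pr[F\sub K(\h_3(n,\ppp))]=O\big(n^{c(F)}\ppp^{\tau(F)}\big)$, where $\tau(F)$ is the minimum number of triangles covering all edges of $F$ and $c(F)\le\tau(F)$ counts those triangles of an optimal cover that are responsible for a single edge of $F$ (each free to choose its third vertex); bounding the number of labelled copies of such an $F$ by $n^v$ and using $v\le3\tau(F)$, $\ppp=\Theta(mp^3)$, $\pp=\Theta(\min\{1,mp^2\})$, $p\le\eps n^{-2/3}m^{-1/3}$ and $m\gg n^4$, one checks that every term with $|F|\ge1$ is $o(\eps^2)$ and that the whole sum is $O(\eps^4)$. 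This gives $\var_G(S(G))=O(\eps^2)$ and hence $\tv(G(n,\bp),G(n,\pp))=O(\eps)$.

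\textbf{Where the difficulty lies.} The reduction and the identity $\mathbb{E}_G[S(G)]=1$ are routine; the real work is the term-by-term bound on $\sum_F(\pp^{-1}-1)^{|F|}\pr[F\sub K(\h_3(n,\ppp))]^2$. The delicate point is that when $mp^2$ is small the factor $\pp^{-1}$ is large, and because edges of $F$ may be covered jointly by overlapping triangles one must pin down, for each edge set $F$, the cheapest way to cover it by triangles and then verify that the entropy $n^{|V(F)|}$ of choosing $F$ is always beaten by $\ppp^{2\tau(F)}$ once $m\gg n^4$ is invoked. It is exactly here that the hypothesis $p\le\eps/(n^{2/3}m^{1/3})$ of Lemma~\ref{lem:caseI} enters, via $n^4m^2p^6=O(\eps^6)$, to keep the dominant $|F|=1$ contribution under control.
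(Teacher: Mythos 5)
Your approach is genuinely different from the paper's. Rather than restricting to a set $\G_3$ of typical graphs and proving a pointwise lower bound $S(G)\ge 1-O(\eps)$ on the likelihood ratio for $G\in\G_3$ (which is what the paper does via Lemmas~\ref{lem:h3G} and~\ref{lem:G3G4} on the concentration of triangle and diamond counts), you observe the clean identity $\mathbb{E}_{G\sim G(n,\pp)}[S(G)]=1$ and reduce the whole problem to a variance estimate for $S(G)$, then push through to the formula $\var(S)=\sum_{F\neq\0}(\pp^{-1}-1)^{|F|}\Pr[F\sub K(\h_3(n,\ppp))]^2$. The reduction (dropping $Q$, the elementary inequality $(1-aS)^+\le(1-a)+(1-S)^+$), the Fubini identity $\mathbb{E}_G[S]=1$, and the moment expansion over edge sets $F$ are all correct; this is a conceptually tighter packaging than the paper's, which achieves the same effect in a more pedestrian way through Chebyshev on $|\h_3(G)|$ and Markov on $I(G)$.

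Where your argument has a real gap is precisely where you flag the difficulty: the estimation of the sum over $F$. You assert $\Pr[F\sub K(\h_3(n,\ppp))]=O(n^{c(F)}\ppp^{\tau(F)})$ and then "one checks" that the contribution of every isomorphism type is $o(\eps^2)$. Neither half of this is proved, and neither is immediate. The probability bound is an upper bound on the conjunction of positively correlated covering events, and a union bound over covers of size $\tau(F)$ alone does not suffice: one must account for covers of larger size, and for $F$ with many edges the naive count of covers can be exponentially large. On the combinatorial side, the sum is over all $2^{\binom{n}{2}}$ edge subsets $F$, and the factor $(\pp^{-1}-1)^{|F|}$ is genuinely large (as big as $\Theta(1/(mp^2))^{|F|}$, with $mp^2$ possibly tending to $0$) when $\pp$ is small; one must verify that the decay of $\Pr[F\sub K(\h_3(n,\ppp))]^2$, which roughly looks like $(n\ppp)^{2\nu(F)}$ with $\nu(F)$ the matching number, beats both the entropy $n^{|V(F)|}$ and the factor $(\pp^{-1}-1)^{|F|}$ uniformly over all configurations (matchings, books, friendship graphs, cliques, dense graphs with $|F|\gg n\ppp\binom{n}{3}$, etc.). The spot checks one can do (matchings giving $\sum_k (n^4 m p^4)^k/k!$, single triangles giving $n^3/m$, books giving $n^{k+2}p^{2k-2}/m$) all come out to $o(\eps^2)$, so the bound is almost certainly true, but a uniform argument is a nontrivial piece of extremal combinatorics that your sketch does not supply. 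The paper sidesteps this entirely by only needing to count triangle-disjoint collections $T$ in a fixed typical graph $G\in\G_3$, which reduces to a single binomial coefficient estimate.
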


\begin{lem}\label{lem:caseII}
Suppose that
$$ m \gg n^4 \hskip 1em \mbox{and} \hskip 1em
\frac{\eps}{n^{2/3} m^{1/3}} < p \leq \bb(\frac{3 \log n}{m}\bb)^{1/2}.
$$
Then
$$\tv\bb( G(n, \pp, \ppp, \pl ), G(n, \pp ) \bb) = O(\eps).$$
(If $m /n^4$ is too large, e.g., $m=n^5$, then there is no such $p$, so the conclusion is trivially true. On the other hand, if it is not too large, e.g., $m= n^4 \log \log n$, then there are  $p$ satisfying the conditions.)
\end{lem}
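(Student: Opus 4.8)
The plan is to stay inside the framework already set up: by \eqref{eq:tv} it suffices to produce a family $\G^*$ of graphs on $V$ with $\Pr[G(n,\pp)\notin\G^*]=O(\eps)$ such that the ratio in \eqref{main} is $\ge 1-O(\eps)$ for every $G\in\G^*$. The governing fact in this range is that $mp^2\to\infty$: since $p>\eps/(n^{2/3}m^{1/3})$ we have $mp^2>\eps^2(m/n^4)^{1/3}$, which tends to infinity because $m\gg n^4$, while still $mp^2\le 3\log n$. Put $s:=mp^2$ and $r:=m/n^4$. Then $\pp=1-e^{-mp^2(1-p)^{n-2}}$ has $q:=1-\pp\le e^{-s/2}\to 0$ (using $(1-p)^{n-2}\ge\tfrac12$, valid since $np\to 0$), whereas $\ppp\le mp^3$ and $\pl\le mp^4$ are far smaller. (In complement language, which I would use only as a sanity check, both $G(n,\bp)$ and $G(n,\pp)$ are then almost complete, $\overline{G(n,\pp)}=G(n,q)$ is sparse, and $\overline{G(n,\bp)}$ is $G(n,q)$ with every edge covered by a member of $\h_3(n,\ppp)\cup\h_4(n,\pl)$ deleted --- a perturbation removing $o(\sqrt{\#\text{non-edges}})$ edges, which is morally why the distance stays $O(\eps)$.)

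For $\G^*$ I would take the $G$ for which $|\h_3(G)|$, $|\h_4(G)|$, the number of edge-sharing pairs of triangles of $G$, and the analogous $K_4$-counts all lie within explicit thresholds of their $G(n,\pp)$-means; the thresholds can be chosen so that the complementary event has probability $O(\eps)$ (first/second moment, using $\eps\gg n^{-1/3}$) while still being small enough for the estimates below --- such a ``sweet spot'' exists precisely because $q=1-\pp\to 0$. Fix $G\in\G^*$. In \eqref{main} discard all $T\subseteq\h_3(G)$ and $Q\subseteq\h_4(G)$ outside the typical size windows around $\binom n3\ppp$ and $\binom n4\pl$ (a negligible contribution, since the relevant tilted binomials concentrate). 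For $T$ in the window write $|K(T)|=3|T|-\sum_e(c_e(T)-1)_+$, where $c_e(T)$ is the number of triangles of $T$ through $e$; as $\sum_e(c_e-1)_+\le o_3(T):=\#\{\text{edge-sharing pairs in }T\}$ and $\pp<1$, one gets $\pp^{-|K(T)|}\ge\pp^{-3|T|}\pp^{\,o_3(T)}$, and likewise $\pp^{-|K(Q)|}\ge\pp^{-6|Q|}\pp^{\,o_4(Q)}$.

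With these substitutions the inner sum in \eqref{main} is $(1-\ppp)^{\binom n3-|\h_3(G)|}\,\mathbb{E}_T[\pp^{-|K(T)|}]$ with $T$ a $\ppp$-random subset of $\h_3(G)$, hence $\ge(1-\ppp)^{\binom n3-|\h_3(G)|}\,\mathbb{E}_T[\pp^{-3|T|}\pp^{\,o_3(T)}]$. The decisive cancellation is that $(1-\ppp)^{\binom n3-|\h_3(G)|}$ and $\mathbb{E}_T[\pp^{-3|T|}]=(1-\ppp+\ppp\pp^{-3})^{|\h_3(G)|}$ have exponents $-\binom n3(1-\pp^3)\ppp+o(\eps)$ and $\binom n3\ppp(1-\pp^3)+o(\eps)$ which match on $\G^*$ up to $o(\eps)$ --- this is exactly the asymptotic tightness of \eqref{eq:asymp_upper}, and checking it is where the threshold choices for $|\h_3(G)|$ enter. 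One is left with the ``overlap factor'' $\mathbb{E}_T[\pp^{\,o_3(T)}]$ (and, after the same treatment of the $Q$-sum and of the triangles of $G$ killed by deleting $K(Q)$, the factors $\mathbb{E}_Q[\pp^{\,o_4(Q)}]$ and the crossed ones). Here convexity of $x\mapsto\pp^x$ and Jensen give $\mathbb{E}_T[\pp^{\,o_3(T)}]\ge\pp^{\,\mathbb{E}_T o_3(T)}$, and $\mathbb{E}_T o_3(T)=\Theta(n^4m^2p^6)=\Theta(s^3/r)$; since $q\le e^{-s/2}$ and $s\mapsto s^3e^{-s/2}$ is maximised over $s\ge\eps^2r^{1/3}$ at the left endpoint (which exceeds $6$ for large $n$), $s^3q/r=O(\eps^6e^{-\eps^2r^{1/3}/2})=o(\eps)$, so $\mathbb{E}_T[\pp^{\,o_3(T)}]\ge e^{-O(s^3q/r)}=1-o(\eps)$; the $K_4$ analogues have even smaller means. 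Combining, the ratio in \eqref{main} is $1-O(\eps)$ throughout $\G^*$, and \eqref{eq:tv} yields the lemma.

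I expect the genuine difficulty to be the control of these overlap factors, i.e.\ of the edge-sharing pairs of artifact triangles. Their expected number $\Theta(n^4m^2p^6)$ can tend to infinity, so unlike in Lemma~\ref{lem:caseI} one cannot simply discard the event that overlaps occur; it is exactly the smallness of $q=1-\pp$ --- the hypothesis, special to this range of $p$, forcing $mp^2\to\infty$ --- that keeps $\pp^{\,o_3(T)}$ close to $1$ in mean even when $o_3(T)$ is large. The technical heart is therefore to carry the two near-cancellations (triangles and $K_4$'s, together with the couplings relating them once $K(Q)$ is removed) through uniformly over $\G^*$, and to verify that every discarded term and every imbalance is $o(\eps)$ --- delicate bookkeeping, but all of it reducing to the single inequality $qs^3/r=o(\eps)$ and its lower-degree relatives.
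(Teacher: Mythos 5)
Your strategy is essentially the paper's: restrict to graphs $G$ with typical $|\h_3(G)|$, $|\h_4(G)|$, diamond count; in \eqref{main} throw away $T,Q$ of atypically large size; bound the deficit $3|T|-|K(T)|$ by the number $I^*(T)$ of edge-sharing pairs of triangles in $T$; and exploit that $q:=1-\pp\leq e^{-(1-o(1))mp^2}$ is exponentially small in this regime, so that the penalty $\pp^{\,I^*(T)}$ is $1-o(\eps)$ even though $\mathbb{E}[I^*(T)]=\Theta(n^4m^2p^6)$ may tend to infinity. You correctly isolate this as the single inequality driving the lemma. The paper implements the last step by Markov --- it shows $I^*(T)\le r:=n^4m^2p^6/\eps^3$ for a $(1-O(\eps))$-fraction of the relevant $T$ and then uses $\pp^r=1-O(\eps)$ --- whereas you propose Jensen with the expectation; these are mechanically interchangeable. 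You also fold the near-cancellation between $(1-\ppp)^{\binom n3-|\h_3(G)|}$ and the $\pp^{-3t}$ factors into an MGF comparison, while the paper packages the same computation as $\binom{|\h_3(G\sm Q)|}{t}\ge (1-O(\eps))\binom{\binom n3}{t}\pp^{3t}$ (Lemmas~\ref{lem:h3G}/\ref{lem:h4G} and their analogue inside the proof); same content, different bookkeeping.

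The one step that does not go through as written is the decoupling of the two factors in $\mathbb{E}_T\!\left[\pp^{-3|T|}\pp^{\,o_3(T)}\right]$. You identify $\mathbb{E}_T\!\left[\pp^{-3|T|}\right]=(1-\ppp+\ppp\pp^{-3})^{|\h_3(G)|}$, cancel it against $(1-\ppp)^{\binom n3-|\h_3(G)|}$, and then treat $\mathbb{E}_T\!\left[\pp^{\,o_3(T)}\right]$ as the leftover factor; but that factorization is not a valid lower bound. Indeed $T\mapsto\pp^{-3|T|}$ is increasing in the subset order and $T\mapsto\pp^{\,o_3(T)}$ is decreasing, so Harris/FKG gives
$\mathbb{E}_T\!\left[\pp^{-3|T|}\pp^{\,o_3(T)}\right]\le\mathbb{E}_T\!\left[\pp^{-3|T|}\right]\mathbb{E}_T\!\left[\pp^{\,o_3(T)}\right]$ --- the wrong direction. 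The repair is a single Jensen on the exponent, $\mathbb{E}_T\!\left[\pp^{-3|T|+o_3(T)}\right]\ge\pp^{-3\mathbb{E}|T|}\,\pp^{\,\mathbb{E}o_3(T)}$, after which the cancelling partner is $\pp^{-3|\h_3(G)|\ppp}$ rather than the exact MGF; to leading order $\log\pp^{-3|\h_3(G)|\ppp}\approx 3|\h_3(G)|\ppp q$ matches $\log(1-\ppp+\ppp\pp^{-3})^{|\h_3(G)|}$, so the error estimates change only by harmless lower-order terms, but the argument has to be rerouted through this one-shot Jensen to be sound. You should also carry the replacement of $\h_3(G)$ by $\h_3(G\sm K(Q))$ through the exponent-matching step (as the paper does via $|\h_3(G\sm Q)|\ge|\h_3(G)|-6\qq n$), since that perturbation, though small, appears multiplied by $\tt$.
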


\mn

Before we prove Lemmas \ref{lem:caseI} and \ref{lem:caseII}, three preliminary lemmas are introduced.

\begin{lem}\label{lem:h3G}
For $m \gg n^4$ and $\frac{\eps}{nm^{1/3}} < p \leq \frac{\eps}{n^{2/3} m^{1/3}}$, suppose that a graph $G$ on $V$ satisfies
\begin{enumerate}[(i)]
\item $|\h_3(G)| \geq (1-\gd){n \choose 3}\pp^3$, where $\gd := \frac{1}{\eps} \( \frac{1-\pp}{n^2\pp}+ \frac{1-\pp}{n^3 \pp^3} \)^{1/2}$,
\item the number $I(G)$ of diamond graphs (i.e., $K_4$ minus one edge) in G  is at most $n^4 \pp^5/\eps$.
\end{enumerate}
Then the number of sets $T$ such that $T \sub \h_3(G)$,  $|T|=t$ and  $ |K(T)|=3t $ is at least 
\begin{equation*} 
(1-O(\eps)) {{n \choose 3} \choose t} \pp^{3t} \qquad \mbox{ for }~0\leq t\leq \tt:= \frac{n^3mp^3}{\eps},
\end{equation*}
where the constant in $O(\eps)$ is independent of $G$ and $t$.
\end{lem}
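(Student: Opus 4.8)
The plan is to count the admissible subfamilies directly, by a first-moment argument, after a short combinatorial reduction. The key observation is that for a family $T$ of triangles of $G$ with $|T|=t$ one has $|K(T)|=3t$ exactly when the $t$ triangles are pairwise edge-disjoint, i.e.\ when $T$ contains no pair of distinct triangles sharing an edge. A pair of distinct triangles of $G$ sharing an edge, say $\{abc,abd\}$ with common edge $ab$, spans a diamond of $G$ on $\{a,b,c,d\}$ (namely $K(\{a,b,c,d\})$ with the edge $cd$ deleted), and conversely each diamond of $G$ is the edge-union of exactly one such pair; hence the number of ``bad'' pairs of triangles of $G$ equals $I(G)$. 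Writing $N:=|\h_3(G)|$, a $t$-subset of $\h_3(G)$ containing a given bad pair is determined by its remaining $t-2$ triangles, so at most $I(G)\binom{N-2}{t-2}$ of the $t$-subsets of $\h_3(G)$ contain a bad pair. Hence the quantity we must bound from below is at least
\begin{equation*}
\binom{N}{t}-I(G)\binom{N-2}{t-2}=\binom{N}{t}\Big(1-I(G)\,\frac{t(t-1)}{N(N-1)}\Big).
\end{equation*}

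First I would check that $\tt=n^3mp^3/\eps\ll\binom{n}{3}\pp^3$ throughout the admissible range, using $\frac{\eps}{nm^{1/3}}<p\le\frac{\eps}{n^{2/3}m^{1/3}}$ and $m\gg n^4$ (which in particular controls the size of $\pp=1-e^{-mp^2(1-p)^{n-2}}$, of order $\min\{mp^2,1\}$, from below). Consequently $t\le\tt$ forces $t<N_0:=(1-\gd)\binom{n}{3}\pp^3\le N$ and every index $i<t$ satisfies $i\ll\binom{n}{3}\pp^3$. Replacing $N$ by the lower bound $N_0$ from hypothesis~(i) (the right-hand side above being nondecreasing in $N$ for $N\ge N_0$, with positive bracket by the second estimate below), it suffices to establish
\begin{equation*}
\binom{N_0}{t}\ \ge\ (1-O(\eps))\,\binom{\binom{n}{3}}{t}\,\pp^{3t}\qquad\text{and}\qquad I(G)\,\frac{t(t-1)}{N_0(N_0-1)}=O(\eps),
\end{equation*}
uniformly for $0\le t\le\tt$, with constants independent of $G$ and $t$; granting these, the displayed quantity is at least $(1-O(\eps))\binom{\binom{n}{3}}{t}\pp^{3t}$.

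For the first estimate I would write $\binom{N_0}{t}/\big(\binom{\binom{n}{3}}{t}\pp^{3t}\big)=\prod_{i=0}^{t-1}(N_0-i)/\big((\binom{n}{3}-i)\pp^3\big)$ and, since $i\pp^3\le i\ll\binom{n}{3}\pp^3$ in each denominator, bound the $i$-th factor below by $1-2\gd-2i(1-\pp^3)/\big(\binom{n}{3}\pp^3\big)$, so the product is at least $1-2t\gd-t^2(1-\pp^3)/\big(\binom{n}{3}\pp^3\big)$. The term $t^2(1-\pp^3)/\big(\binom{n}{3}\pp^3\big)$ is controlled by $1-\pp^3\le 3(1-\pp)=3e^{-mp^2(1-p)^{n-2}}$ together with $t\le\tt$, and $t\gd=O(\eps)$ is exactly what the chosen value of $\gd$ combined with $t\le\tt$ is designed to give. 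For the second estimate I would insert $I(G)\le n^4\pp^5/\eps$ from hypothesis~(ii) and $N_0=\Theta(n^3\pp^3)$, reducing it to $n^4m^2p^6/(\eps^3\pp)=O(\eps)$. All the resulting numerical inequalities I would verify by splitting into the case $mp^2\lsim 1$, where $\pp=\Theta(mp^2)$ and one uses e.g.\ $n^4m^2p^6/(\eps^3\pp)=O(n^4/(\eps^3m))$, and the case $mp^2\gsim 1$, where $\pp=\Theta(1)$ and one uses the bound $n^4m^2p^6\le\eps^6$ coming from $p\le\frac{\eps}{n^{2/3}m^{1/3}}$. The decisive input from $m\gg n^4$ is that then $n^4/m=o(\eps^c)$ for every fixed $c$ (because $m/n^4\to\infty$), which absorbs all the lower-order factors.

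The step I expect to be the main obstacle is the bookkeeping inside the first estimate, specifically verifying $t\gd=O(\eps)$ over the whole range: because $\pp$ varies between $\ll 1$ and $\Theta(1)$ here, and the two summands in the definition of $\gd$ dominate in different sub-regimes (governed, roughly, by whether $n\pp^2$ is below or above $1$), one must split into several sub-cases and, in each, combine the two-sided bound on $p$ with $m\gg n^4$ so that the exponents of $n$, $m$ and $\eps$ line up to yield $O(\eps)$ rather than merely $o(1)$; the precise shapes of $\gd$, $\tt$ and $\eps$ appear to be tailored to make exactly this computation close.
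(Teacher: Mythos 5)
Your proof is correct and follows essentially the same strategy as the paper's: subtract the $t$-subsets of $\h_3(G)$ containing a bad pair (a pair of triangles sharing an edge, equivalently a diamond of $G$), giving a lower bound $\binom{|\h_3(G)|}{t} - I(G)\binom{\cdot}{t-2}$, and then compare $\binom{|\h_3(G)|}{t}$ to $\binom{\binom{n}{3}}{t}\pp^{3t}$ using hypothesis (i) and the parameter bounds, reducing everything to the estimates $t\gd = O(\eps)$, $\tt^2/|\h_3(G)|=O(\eps)$ and $I(G)\tt^2/|\h_3(G)|^2=O(\eps)$ that the paper verifies via $(\gd\tt)^2 = O(\eps^2)$ and $\pp=\Theta(mp^2/(1+mp^2))$. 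Your use of the slightly tighter $\binom{N-2}{t-2}$ in place of the paper's $\binom{N}{t-2}$ and your product-of-ratios estimation of the binomial quotient are minor stylistic variations of the same argument.
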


\begin{proof}
Let $X_t(G)$ be the number of sets $T$ such that $T \sub \h_3(G) $, $|T|=t$ and  $|K(T)|=3t$. We infer that
\begin{equation*}
\begin{split}
X_t(G)&\geq {|\h_3(G)| \choose t}   - I(G){|\h_3(G)|\choose t-2} \\
&=\( 1- \frac{ t(t-1)I(G)}{(|\h_3(G)|-t+2)(|\h_3(G)|-t+1)}\) {|\h_3(G)| \choose t} \\
&\geq \( 1- \frac{ \tt^2I(G) }{(|\h_3(G)|-\tt)^2}\) {|\h_3(G)| \choose t}.
\end{split}
\end{equation*}
Since $|\h_3(G)|=\Omega(n^3p_2^3)$
, we have that
\begin{align}\label{eq:t2/h3G} 
\frac{\tt^2}{|\h_3(G)|}=O\bb(\frac{n^3m^2p^6}{\eps^2p_2^3}\bb)=O\bb(\frac{n^3}{\eps^2m}+\frac{n^3m^2p^6}{\eps^2}\bb)=O\bb(\frac{\eps^2}{n}\bb),
\end{align}
where the second equality follows from $\pp=\Theta\bb(\frac{mp^2}{1+mp^2}\bb)$ and the third equality follows from $p \leq \frac{\eps}{n^{2/3} m^{1/3}}$. In particular $|\h_3(G)|\gg t_0$, and hence
\begin{equation*}
X_t(G)\geq \bb( 1- \frac{ 2\tt^2 I(G)}{|\h_3(G)|^2}\bb) {|\h_3(G)| \choose t}.
\end{equation*}
It is easy to check from \eqref{eq:t2/h3G} that
$$
\frac{2\tt^2 I(G)}{|\h_3(G)|^2}=\frac{2I(G)}{|\h_3(G)|}\cdot\frac{\tt^2}{|\h_3(G)|}
=O\bb(\frac{n^4 \pp^5}{\eps n^3 \pp^3}\cdot \frac{\eps^2}{n}  \bb)
=O(\eps)
$$
and
\begin{align*}
{|\h_3(G)| \choose t} & \geq \bb(1-\frac{\tt}{|\h_3(G)|}\bb)^{\tt} \frac{|\h_3(G)|^t}{t!} \geq \bb(1-O\bb(\frac{\eps^2}{n}\bb)\bb)\frac{|\h_3(G)|^t}{t!} \geq    (1-O(\eps)) (1-\gd)^{\tt} 
 {  {n \choose 3}  \choose t} \pp^{3t}
\end{align*}
as $|\h_3(G)| \geq (1-\gd) {n \choose 3} \pp^3$. Since $\pp=\Theta\bb(\frac{mp^2}{1+mp^2}\bb)$  and 
$p \leq \frac{\eps}{n^{2/3} m^{1/3}}$ yield
\begin{equation}\label{eq:dt} 
( \gd\tt )^2 = O\bb(\frac{n^4mp^4}{\eps^4} + \frac{n^3}{\eps^4 m} \bb) = O\bb( 
\bb(\frac{n^4}{m}\bb)^{1/3} + \frac{1}{n} \bb(\frac{ n^4}{\eps^{4} m} \bb) \bb) 
= O(\eps^2),
\end{equation}
the desired lower bound for $X_t(G)$ follows. 
\end{proof}

\mn

The same argument gives the next lemma regarding $\h_4(G)$.

\begin{lem}\label{lem:h4G}
For $m \gg n^4$ and $\frac{\eps}{n^{2/3}m^{1/3}} < p \leq \(\frac{3 \log n}{m}\)^{1/2}$, suppose that a graph $G$ on $V$ satisfies $$|\h_4(G)| \geq \(1-\frac{1}{\eps n}\){n \choose 4}\pp^6.$$ Then the number of $Q \sub \h_4(G)$ with $|Q|=q$ and  $ |K(Q)|=6q $ is at least 
\begin{equation*} 
(1-O(\eps)) {{n \choose 4} \choose q} \pp^{6q} \qquad \mbox{ for }~ 0\leq q\leq \qq:= \frac{n^4mp^4}{\eps},
\end{equation*}
where the constant in $O(\eps)$ is independent of $G$ and $t$.
\end{lem}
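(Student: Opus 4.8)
The plan is to repeat the argument proving Lemma~\ref{lem:h3G} almost verbatim, with $K_3$'s replaced by $K_4$'s — so that the exponent $3$ occurring in $\pp^{3t}$, $|K(T)|=3t$, etc.\ becomes $6$ — and with the role of the diamond count $I(G)$, which is nothing but the number of unordered pairs of distinct triangles of $G$ sharing an edge, now played by the number $J(G)$ of unordered pairs of distinct $K_4$'s of $G$ that share at least one edge. Set $Y_q(G)$ to be the number of $Q\sub\h_4(G)$ with $|Q|=q$ and $|K(Q)|=6q$, i.e.\ the number of families of $q$ pairwise edge-disjoint $K_4$'s of $G$. A family $Q$ of $q$ $K_4$'s violates $|K(Q)|=6q$ precisely when two of its members share an edge, so a union bound gives, exactly as in the first display of the proof of Lemma~\ref{lem:h3G},
$$
Y_q(G)\ \geq\ {|\h_4(G)|\choose q}-J(G){|\h_4(G)|\choose q-2}
\ =\ \Bigl(1-\tfrac{q(q-1)J(G)}{(|\h_4(G)|-q+2)(|\h_4(G)|-q+1)}\Bigr){|\h_4(G)|\choose q}.
$$

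The simplifying feature of this regime is that $p>\tfrac{\eps}{n^{2/3}m^{1/3}}$ forces $mp^2>\eps^2(m/n^4)^{1/3}\to\infty$ (using $m\gg n^4$), and since $\pp=\Theta\bigl(\tfrac{mp^2}{1+mp^2}\bigr)$ this yields $\pp=\Theta(1)$; in particular $|\h_4(G)|=\Omega(n^4)$ by the hypothesis $|\h_4(G)|\geq(1-\tfrac1{\eps n}){n\choose4}\pp^6$. Moreover $J(G)=O(n^6)$ trivially, since any pair of $K_4$'s sharing an edge spans at most $6$ vertices. With $\qq=n^4mp^4/\eps$ and $p\leq(3\log n/m)^{1/2}$ one then checks, just as \eqref{eq:t2/h3G} and \eqref{eq:dt} are checked, that
$$
\frac{\qq^2 J(G)}{|\h_4(G)|^2}=O\Bigl(\frac{n^6m^2p^8}{\eps^2}\Bigr)=O(\eps),\qquad
\frac{\qq^2}{|\h_4(G)|}=O(\eps),\qquad
\frac{\qq}{\eps n}=O(\eps),
$$
all three being comfortably small because $m\gg n^4$ and $\eps\geq 1/\log n$; these are the analogues of $\tfrac{2\tt^2I(G)}{|\h_3(G)|^2}=O(\eps)$ and $(\gd\tt)^2=O(\eps^2)$ in Lemma~\ref{lem:h3G}. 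In particular $|\h_4(G)|\gg\qq$.

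Assembling the pieces as before: for $0\leq q\leq\qq$ the first display reads $Y_q(G)\geq(1-O(\eps)){|\h_4(G)|\choose q}$, and
$$
{|\h_4(G)|\choose q}\ \geq\ \Bigl(1-\tfrac{\qq}{|\h_4(G)|}\Bigr)^{\qq}\frac{|\h_4(G)|^q}{q!}
\ \geq\ (1-O(\eps))\Bigl(1-\tfrac{1}{\eps n}\Bigr)^{\qq}{{n\choose4}\choose q}\pp^{6q}
\ =\ (1-O(\eps)){{n\choose4}\choose q}\pp^{6q},
$$
using $|\h_4(G)|\geq(1-\tfrac1{\eps n}){n\choose4}\pp^6$ and $(1-\tfrac1{\eps n})^{\qq}=1-O(\eps)$, which follows from $\qq/(\eps n)=O(\eps)$. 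Combining the two displays gives $Y_q(G)\geq(1-O(\eps)){{n\choose4}\choose q}\pp^{6q}$ with the implied constant uniform in $G$ and $q$, as required.

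The only respect in which this is not a literal transcription of Lemma~\ref{lem:h3G} is that here no hypothesis on pairs of $K_4$'s (the analogue of condition~(ii)) is needed: this is exactly because in the present range $\pp\to1$, so $|\h_4(G)|$ is of order $n^4$ and the crude bound $J(G)=O(n^6)$ already suffices, whereas in Case~I the diamond count is genuinely close to its typical value $\Theta(n^4\pp^5)$ and had to be controlled by hypothesis. The only real work, then, is verifying the three displayed estimates — i.e.\ bookkeeping the correct powers of $n,m,p$ against $\eps$ and $m\gg n^4$ — and that is the step where a slip would occur, though it is routine.
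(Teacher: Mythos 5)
Your proof is correct and follows essentially the same route as the paper's: bound the number of bad $q$-subsets via the crude universal estimate that at most $O(n^6)$ pairs of $K_4$'s share an edge (the paper computes $\binom{n}{4}\binom{n}{2}\binom{4}{2}\leq n^6$ directly; you package this as $J(G)=O(n^6)$), use $mp^2\to\infty$ so that $\pp=1-o(1)$ and $|\h_4(G)|=\Theta(n^4)$, check that $\qq^2 n^6/|\h_4(G)|^2$, $\qq^2/|\h_4(G)|$, and $\qq/(\eps n)$ are all $O(\eps)$, and convert $\binom{|\h_4(G)|}{q}$ into $(1-O(\eps))\binom{\binom{n}{4}}{q}\pp^{6q}$. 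Your closing remark, that no hypothesis controlling pairs of $K_4$'s is needed (unlike condition (ii) in Lemma~\ref{lem:h3G}) because $\pp\to1$ makes the trivial $n^6$ bound suffice, is exactly the reason the paper states no such hypothesis; the verification of the three displayed estimates matches the paper's bookkeeping.
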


\noindent \textbf{Remark.} The expected number of columns of the matrix $R(n,m;p)$ with four or more 1's is $\Theta(n^4mp^4)$. The parameter $\qq$ is chosen to be substantially, but not extremely, bigger than the expected number $\Theta(n^4mp^4)$.

\begin{proof}
Let $Y_q(G)$ be the number of $Q \sub \h_4(G)$ with $|Q|=q$ and  $ |K(Q)|=6q$. Observe that the number of pairs of $K_4$ in the complete graph on $V$ sharing at least an edge is at most ${n \choose 4} {n \choose 2} {4 \choose 2} \leq n^6$. Thus
\begin{align*}
Y_q(G)&\geq { |\h_4(G)| \choose q } - n^6  { |\h_4(G)| \choose q-2 } \\
& = \bb( 1- \frac{q(q-1)n^6}{(|\h_4(G)|-q+2)(|\h_4(G)|-q+1)} \bb) {|\h_4(G)| \choose q } \\
&\geq  \bb( 1- \frac{q_0^2n^6}{(|\h_4(G)|-q_0)^2} \bb) {|\h_4(G)| \choose q }.
\end{align*}
Since $mp^2 \geq m \( \frac{\eps}{n^{2/3}m^{1/3}}\)^2 = \eps^2 \(\frac{m}{n^4} \)^{1/3} \ra \infty$, we have that $\pp=1-o(1)$ and
$$
|\h_4(G)| \geq \bb(1- \frac{1}{\eps n}\bb){n \choose 4}\pp^6 =(1-o(1)){n \choose 4}.$$ 
Therefore $\qq = \frac{n^4mp^4}{\eps} = O(\eps \log^2 n)$ implies that
$$ \frac{\qq^2n^6}{(|\h_4(G)|-\qq)^2}=O(\eps),$$
and hence
$$
Y_q(G) \geq (1-O(\eps)) {|\h_4(G)| \choose q }.
$$
Since $\frac{\qq^2}{|\h_4(G)|} = O\(\frac{\eps^2 \log^4 n}{n^4}\)= O(\eps)$ and 
$\frac{\qq}{\eps n} = O \(\frac{\log^2 n}{n} \) = O(\eps)$, we have that
\begin{align*}
\binom{|\h_4(G)|}{q} \geq \bb(1-\frac{\qq}{|\h_4(G)|}\bb)^{\qq}\frac{|\h_4(G)|^q}{q!} \geq (1-O(\eps))\bb(1-\frac{1}{\eps n}\bb)^{\qq}\binom{\binom{n}{4}}{q}\pp^{6q} \geq (1-O(\eps))\binom{\binom{n}{4}}{q}\pp^{6q},
\end{align*}
which gives the desired lower bound for $Y_q(G)$.
\end{proof}

\begin{lem}\label{lem:G3G4}
For $\gd = \frac{1}{\eps} \( \frac{1-\pp}{n^2\pp}+ \frac{1-\pp}{n^3 \pp^3} \)^{1/2}$, let $\G_3$ be the set of all graphs $G$ on $V$ satisfying 
$$|\h_3(G)| \geq (1-\gd){n \choose 3}\pp^3 \hskip 0.5em \mbox{and} \hskip 0.5em I(G) \leq n^4 \pp^5/\eps,$$ recalling that $I(G)$ denotes the number of diamond graphs as in Lemma~\ref{lem:h3G},
and let $\G_4$ be the set of all graphs $G$ in $\G_3$ satisfying $$|\h_4(G)| \geq \(1-\frac{1}{\eps n}\){n \choose 4}\pp^6.$$ Then for $m\gg n^4$ we have
$$\pr[G(n,\pp) \in \G_3] = 1-O(\eps) \qquad \mbox{ for }~\frac{\eps}{nm^{1/3}} < p \leq \bb(\frac{3 \log n}{m}\bb)^{1/2}$$
and
$$\pr[G(n,\pp) \in \G_4] = 1-O(\eps) \qquad \mbox{ for }~\frac{\eps}{n^{2/3}m^{1/3}} < p \leq \bb(\frac{3 \log n}{m}\bb)^{1/2},$$
where the constants in $O(\eps)$ are independent of $p$.
\end{lem}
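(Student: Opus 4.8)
The plan is to bound the two complementary events by a union bound and to estimate each piece by an elementary tail inequality: Markov for the diamond count and Chebyshev (i.e. the second moment method) for the triangle count and the $K_4$ count. The point is that $\gd$, the threshold $n^4\pp^5/\eps$, and the threshold $1-\tfrac1{\eps n}$ are all tuned so that these crude estimates come out as $O(\eps)$, uniformly in $p$.

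\emph{The set $\G_3$.} Here $\pr[G(n,\pp)\notin\G_3]$ is at most $\pr\bigl[|\h_3(G(n,\pp))| < (1-\gd)\binom{n}{3}\pp^3\bigr] + \pr\bigl[I(G(n,\pp)) > n^4\pp^5/\eps\bigr]$. The diamond term is immediate from Markov: a copy of the diamond is a prescribed set of five edges on four vertices (equivalently, two triangles with a common edge), so $\mathbb{E}\,I(G(n,\pp)) = \Theta(n^4\pp^5)$, whence this probability is $O(\eps)$. For the triangle term, $\mathbb{E}\,|\h_3(G(n,\pp))| = \binom{n}{3}\pp^3$, and the variance is carried by the $\Theta(n^4)$ pairs of triangles sharing an edge, each with covariance $\pp^5-\pp^6=\Theta((1-\pp)\pp^5)$, together with the $\Theta(n^3)$ diagonal terms $\pp^3-\pp^6=\Theta((1-\pp)\pp^3)$ (using $1-\pp^3=\Theta(1-\pp)$); the indicators of triangles sharing at most one vertex are independent. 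Hence $\var|\h_3(G(n,\pp))| = \Theta\bigl((1-\pp)(n^4\pp^5+n^3\pp^3)\bigr)$, and Chebyshev gives
$$\pr\Bigl[|\h_3(G(n,\pp))| < (1-\gd)\textstyle\binom{n}{3}\pp^3\Bigr] \le \frac{\var|\h_3(G(n,\pp))|}{\gd^2\binom{n}{3}^2\pp^6} = \frac{\Theta(1-\pp)}{\gd^2}\Bigl(\frac{1}{n^2\pp}+\frac{1}{n^3\pp^3}\Bigr).$$
Substituting $\gd^2 = \frac{1-\pp}{\eps^2}\bigl(\frac{1}{n^2\pp}+\frac{1}{n^3\pp^3}\bigr)$ collapses the right side to $\Theta(\eps^2)$; and when $\gd\ge1$ the triangle condition is vacuous. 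Thus $\pr[G(n,\pp)\in\G_3]=1-O(\eps)$ for $\frac{\eps}{nm^{1/3}}<p\le(3\log n/m)^{1/2}$, with constants independent of $p$.

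\emph{The set $\G_4$.} Since $\G_4\subseteq\G_3$, it remains to bound $\pr\bigl[|\h_4(G(n,\pp))| < (1-\tfrac1{\eps n})\binom{n}{4}\pp^6\bigr]$. In this range $mp^2\ge\eps^2(m/n^4)^{1/3}\to\infty$, so $\pp=1-o(1)$ and $\mathbb{E}\,|\h_4(G(n,\pp))| = \binom{n}{4}\pp^6=\Theta(n^4)$. A second moment estimate again works: the variance is dominated by the $\Theta(n^6)$ pairs of $K_4$'s sharing exactly one edge, each with eleven edges in the union and covariance $\pp^{11}-\pp^{12}=\Theta(1-\pp)$, while pairs sharing a triangle contribute $\Theta(n^5(1-\pp))$ and identical $K_4$'s $\Theta(n^4(1-\pp))$, so $\var|\h_4(G(n,\pp))| = \Theta(n^6(1-\pp))$. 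Chebyshev with deficit $\frac1{\eps n}\binom{n}{4}\pp^6=\Theta(n^3/\eps)$ then gives a bound of order $\frac{n^6(1-\pp)}{(n^3/\eps)^2}=\Theta(\eps^2(1-\pp))=o(\eps^2)$, using $\pp^{12}=\Theta(1)$. Combined with the $\G_3$ estimate this yields $\pr[G(n,\pp)\in\G_4]=1-O(\eps)$.

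The computations are routine; the only points needing care are the bookkeeping of the $(1-\pp)$ factors and the small fixed powers of $\pp$ in the two variances — in particular the identities $1-\pp^k=\Theta(1-\pp)$ for the constant $k$'s that arise, and the observation that $\pp$ may tend to $0$ in the $\G_3$ range without disturbing the cancellation — together with verifying that $\gd$ is exactly the quantity making the triangle Chebyshev bound equal $\Theta(\eps^2)$ with $p$-independent constants. I anticipate no genuine obstacle.
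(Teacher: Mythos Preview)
Your proposal is correct and takes essentially the same approach as the paper: Markov's inequality for the diamond count and Chebyshev's inequality for both the triangle and $K_4$ counts, with the same variance estimates $\var|\h_3|=O\bigl((1-\pp)(n^4\pp^5+n^3\pp^3)\bigr)$ and $\var|\h_4|=O(n^6)$. Your bookkeeping of the $(1-\pp)$ factors is in fact slightly more explicit than the paper's, and the remark that the triangle condition is vacuous when $\gd\ge 1$ is a nice added touch.
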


\begin{proof}
For $X_3: =|\h_3(G(n,\pp))|$, Chebyshev's inequality gives that
\begin{align*}
\pr \bb[ X_3 < (1-\delta) {n \choose 3}\pp^3 \bb] &\leq \pr \bb[ |X_3 - E[X_3]| > \delta {n \choose 3}\pp^3 \bb]  \leq \frac{\var[X_3]}{\delta^2 {n \choose 3}^2 \pp^6} =O(\eps^2)
\end{align*}
as $E[X_3] = {n \choose 3}\pp^3$ and $\var[X_3] = O \left( (n^4 \pp^5 + n^3 \pp^3) (1-\pp) \right)$. Moreover, Markov's inequality implies that
$$\pr \bb[ I(G(n,\pp)) > \frac{n^4 \pp^5}{\eps} \bb] \leq \eps$$
since $E\[I(G(n,\pp))\] = {n \choose 4}6\cdot \pp^5 \leq n^4 \pp^5$. Therefore,
$$
\pr[G(n,\pp) \notin \G_3] \leq \pr \bb[ X_3 < (1-\delta) {n \choose 3}\pp^3 \bb] + \pr \bb[ I(G(n,\pp)) > \frac{n^4 \pp^5}{\eps} \bb] = O(\eps).
$$
Similarly, for $X_4 = |\h_4(G(n,\pp))|$, it is not hard to see that 
$$E[X_4] = {n \choose 4}\pp^6 \and \var[X_4] = O \left(  n^6\right)$$
as $\pp=1-o(1)$ for $p > \frac{\eps}{n^{2/3}m^{1/3}}$, and Chebyshev's inequality yields that
\begin{align*}
\pr \bb[ X_4 < \bb(1-\frac{1}{\eps n}\bb){n \choose 4}\pp^6 \bb] &\leq \pr \bb[ |X_4 - E[X_4] | > \frac{1}{\eps n} {n \choose 4}  \pp^6 \bb]  \leq \frac{\eps^2 n^2\var [X_4]}{{n \choose 4}^2 \pp^{12}} =O(\eps^2).
\end{align*}
Therefore,
\begin{equation*}
\pr[G(n,\pp) \notin \G_4] \leq \pr[G(n,\pp) \notin \G_3] + \pr \bb[ X_4 < \bb(1-\frac{1}{\eps n}\bb){n \choose 4}\pp^6 \bb] = O(\eps).
\end{equation*}
\end{proof}

\bn

Now we prove the main lemmas.

\begin{proof}[\textbf{Proof of Lemma \ref{lem:caseI}}]
 Equality \eqref{eq:tv} and Lemma \ref{lem:G3G4} imply that the total variation distance between $G(n,\bp)$ and $G(n,\pp)$ is at most
\begin{equation}\label{eq:G3upper}
\begin{split}
& \pr[G(n,\pp) \notin \G_3] +\sum_{G \in \G_3} \Big( \Pr[G(n,\pp)=G] -\min\big\{\Pr[G(n,\bp)=G], \Pr[G(n,\pp)=G]\big\}\Big) \\
& =  O(\eps) + \sum_{G \in \G_3} \Big( \Pr[G(n,\pp)=G] -\min\big\{\Pr[G(n,\bp)=G], \Pr[G(n,\pp)=G]\big\}\Big).
\end{split}
\end{equation}
Taking $Q=\0$ in~\eqref{main}, we have that
\begin{equation*} 
\begin{split}
\pr[ G(n, \bp) = G ] &\geq \pr[ G(n,\pp) =G] (1-p_4)^{n\choose 4} \sum_{ T \sub \h_3(G)  } \ppp^{|T|} (1-\ppp)^{{n \choose 3}-|T|} \pp^{-|K(T)|} \\
& = (1-O(\eps)) \pr[ G(n,\pp) =G] \sum_{ T \sub \h_3(G)  } \ppp^{|T|} (1-\ppp)^{{n \choose 3}-|T|}
\pp^{-|K(T)|}
\end{split}
\end{equation*}
as ${n \choose 4} \pl  = \Theta (n^4 m p^4) = O(\eps)$.
For $G \in \G_3$, Lemma \ref{lem:h3G} gives that
\begin{equation*}
\begin{split}
\sum_{ T \sub \h_3(G)  } \ppp^{|T|} (1-\ppp)^{{n \choose 3}-|T|} \pp^{-|K(T)|} 
& \geq \sum_{t=0}^{\tt} \sum_{T \sub \h_3(G) \atop |T|=t, |K(T)|=3t  } \ppp^{t} (1-\ppp)^{{n \choose 3}-t} \pp^{-3t} \\ 
&\geq (1-O(\eps))\sum_{t=0}^{\tt}    {{n \choose 3} \choose t}  \ppp^{t} (1-\ppp)^{{n \choose 3}-t},
\end{split}
\end{equation*}
and
\begin{equation*}
\frac{\pr[G(n,\bp)=G]}{\pr[G(n,\pp)=G]} \geq (1-O(\eps)) \sum_{t=0}^{\tt}    {{n \choose 3} \choose t}  \ppp^{t} (1-\ppp)^{{n \choose 3}-t}.
\end{equation*}
Since $\tt = n^3mp^3/\eps = \Theta(n^3\ppp/\eps)$, Markov's inequality yields that
$$
\sum_{t =0}^{\tt} {{n \choose 3} \choose t}  \ppp^{t} (1-\ppp)^{{n \choose 3}-t} = 1 - \pr \bb[ \bin\bb({n \choose 3}, \ppp\bb) > \tt \bb] = 1-O(\eps),
$$
where $\bin(n', p')$ is the binomial random variable with parameters $n'$ and $p'$. Therefore, $$\mbox{$\pr[G(n,\bp)=G] \geq (1-O(\eps)) \pr[G(n,\pp)=G]$ \hskip 1em for $G \in \G_3$,}$$ which together with \eqref{eq:G3upper} implies that
$
\tv\bb(G(n,\bp),G(n,\pp)\bb) = O(\eps)$,  provided 
$$ m \gg n^4 \hskip 1em \mbox{and} \hskip 1em
\frac{\eps}{n^{2/3} m^{1/3}} < p \leq \bb(\frac{3 \log n}{m}\bb)^{1/2}.
$$
\end{proof}

\bn 

\begin{proof}[\textbf{Proof of Lemma \ref{lem:caseII}}]

As in the proof of Lemma \ref{lem:caseI}, it follows from \eqref{eq:tv} and Lemma \ref{lem:G3G4} that
\begin{eqnarray}\label{eq:prlem43_1}
&&\tv\bb(G(n,\bp),G(n,\pp)\bb)\nonumber \\ &\leq& O(\eps) + \sum_{G \in \G_4} \Big( \Pr[G(n,\pp)=G] -\min\big\{\Pr[G(n,\bp)=G], \Pr[G(n,\pp)=G]\big\}\Big).
\end{eqnarray}
Let $Q\sub \h_4 (G)$, and we write $G\sm Q$ for $G \sm K(Q)$ for brevity. For $G \in \G_4$, the sum in the lower bound of \eqref{main} restricted to the cases $|T| \leq \tt= n^3mp^3/\eps$ and $|Q| \leq \qq=n^4mp^4/\eps$ gives
\begin{equation*}
\begin{split}
\frac{\pr[ G(n, \bp) = G ]}{\pr[ G(n,\pp) =G]}
 \geq \sum_{q=0}^{\qq} & \sum_{Q \sub \h_4 (G) \atop |Q|=q, |K(Q)|=6q }
  \pl^{q} (1-\pl)^{{n \choose 4}-q} \pp^{-6q}\sum_{t=0}^{\tt} \sum_{T \sub \h_3(G\sm Q) \atop |T|=t} \ppp^{t} (1-\ppp)^{{n \choose 3}-t}
\pp^{-|K(T)|}.
\end{split}
\end{equation*}
Lemma \ref{lem:h4G} and Markov's inequality imply that 
\begin{equation*}
\begin{split}
\sum_{q=0}^{\qq} \sum_{Q \sub \h_4 (G) \atop |Q|=q, |K(Q)|=6q }
 \pl^{q} (1-\pl)^{{n \choose 4}-q} \pp^{-6q} &\geq  (1-O(\eps)) \sum_{q=0}^{\qq} 
{{n \choose 4} \choose q} \pl^{q} (1-\pl)^{{n \choose 4}-q} \\
& = (1-O(\eps)) \bb(1- \pr \bb[ \bin\bb({n \choose 4}, \pl \bb) > \qq \bb]\bb) \\
& = 1-O(\eps),
\end{split}
\end{equation*}
where $\bin(n', p')$ is a binomial random variable with parameters $n'$ and $p'$. Therefore,
\begin{equation}\label{eq:lowbound2}
\begin{split}
\frac{\pr[ G(n, \bp) = G ]}{\pr[ G(n,\pp) =G]} \geq (1-O(\eps)) \min_{Q \sub \h_4(G) \atop |Q| \leq \qq} \sum_{t=0}^{\tt} \sum_{T \sub \h_3(G\sm Q) \atop |T|=t} \ppp^{t} (1-\ppp)^{{n \choose 3}-t} \pp^{-|K(T)|}.
\end{split}
\end{equation}

For $T \sub \h_3(G)$, let $I^*(T)$ be the number of pairs of distinct triangles in $T$ with a common edge. (It is a bit different from the definition $I(\cdot)$ in Lemma~\ref{lem:h3G}.) For an edge $e$, let $d_T(e)$ be the number of triangles in $T$ which contain $e$. Then 
$$3|T| - |K(T)| 
= \sum_{e:  d_T(e) \geq 2} ( d_T (e) -1) 
\leq \sum_{e: d_T(e) \geq 2} { d_T(e) \choose 2}
= I^*(T).$$
For a fixed $Q \sub \h_4(G)$ with $|Q| \leq \qq$, we will show that the number of $T\sub \h_3(G\sm Q) $ with $|T|=t\leq \tt = n^3mp^3/\eps$ and  $I^*(T) \leq r:=n^4m^2p^6/\eps^3$ is at least
\begin{equation}\label{eq:it}
(1-O( \eps))  {{n \choose 3} \choose t} 
\pp^{3t}.
\end{equation}
Then
\begin{equation*}
\begin{split}
\sum_{t=0}^{\tt} \sum_{T \sub \h_3(G\sm Q) \atop |T|=t } \ppp^{t} (1-\ppp)^{{n \choose 3}-t}
\pp^{-|K(T)|} &\geq \sum_{t=0}^{\tt} \sum_{T \sub \h_3(G\sm Q) \atop |T|=t, I^*(T)\leq r } \ppp^{t} (1-\ppp)^{{n \choose 3}-t} \pp^{-|K(T)|} \\
&\geq (1-O(\eps))\pp^r\cdot \sum_{t=0}^{\tt} {{n \choose 3} \choose t}  \ppp^{t} (1-\ppp)^{{n \choose 3}-t} \\
&\geq (1-O(\eps))\pp^r,
\end{split}
\end{equation*}
where the last inequality follows from Markov's inequality. Since 
$\pp^r = (1- e^{-mp^2(1-p)^{n-2}})^r\geq 1-O(re^{-mp^2})$
and 
$$ re^{-mp^2}= \frac{n^4m^2p^6}{\eps^3}e^{-mp^2} = \frac{n^4}{\eps^3m}\cdot (mp^2)^3 e^{-mp^2} = O(\eps),$$
we have that $\pp^r = 1-O(\eps)$ and
$$
\sum_{t=0}^{\tt} \sum_{T \sub \h_3(G\sm Q) \atop |T|=t } \ppp^{t} (1-\ppp)^{{n \choose 3}-t}
\pp^{-|K(T)|} \geq 1-O(\eps).
$$
This together with \eqref{eq:lowbound2} and \eqref{eq:prlem43_1} completes the proof of Lemma \ref{lem:caseII}.

It remains to prove \eqref{eq:it}. For $t\leq t_0$, we take the uniform random collection $R=R(t)$ of triangles that is equally likely to be $T$ for every $T \sub \h_3(G\sm Q) $ with $|T| =t$. In other words, for every $T \sub \h_3(G\sm Q) $ with $|T| =t$,
$$ \pr  [ R=T ] = {|\h_3(G\sm Q)| \choose t}^{-1}. $$
Since the number of sets $T\sub \h_3(G\sm Q)$ with $|T|=t$ containing a diamond graph is less than or equal to $ I(G) {|\h_3(G\sm Q)| \choose t-2}$, we have that
$$
E[I^*(R)] \leq I(G) {|\h_3(G\sm Q)| \choose t-2}{|\h_3(G\sm Q)| \choose t}^{-1}\leq \frac{I(G) t_0^2}{\(|\h_3(G\sm Q)|-t_0\)^2},
$$ where $I(G)$ is defined in Lemma~\ref{lem:h3G}.
For $G\in \G_4$, since $K(Q)$ has at most $6 |Q| \leq 6 \qq = \frac{6n^4mp^4}{\eps} = O(\eps\log^2 n) $ edges and each edge in $G$ is contained in at most $n$ triangles in $\h_3(G)$,
\begin{equation}\label{eq:h3g-q}
|\h_3 (G\sm Q)| \geq |\h_3 (G)| - 6 \qq n =  |\h_3 (G)| - O(\eps n\log^2 n)= \Theta (n^3).
\end{equation}
As $\tt = \frac{n^3mp^3}{\eps} \ll n^3$ and $I(G) \leq I(K_n) =  6{n \choose 4} \leq n^4$,
$$
E[I^*(R)] = O \bb( \frac{I(G) t_0^2}{n^6} \bb) =O \bb( \frac{\tt^2}{n^2}\bb)=O\bb(\frac{n^4m^2p^6}{\eps^2}\bb)
$$
and Markov's inequality gives that
$$ 
\pr \[ I^*(R) > r\] \leq \frac{\eps^3 E[I^*(R)]}{n^4m^2p^6} = O(\eps).
$$
The number $Z$ of $T\sub \h_3(G\sm Q) $ with $|T|=t$ and  $I^*(T) \leq r$ satisfies 
\begin{equation*}
Z = (1-O( \eps))  {|\h_3(G\sm Q)| \choose t}.
\end{equation*}

Now we estimate ${|\h_3(G\sm Q)| \choose t}$.
Since $G \in \G_4$ and $\pp=1-o(1)$,  it is obtained similarly to \eqref{eq:h3g-q} that
$$ 
|\h_3(G\sm Q)| \geq |\h_3 (G)| - 6\qq n= 
\(1-\gd - O\(\frac{\qq}{n^2}\)\) {n \choose 3} \pp^3,$$
and then
\begin{align*}
 {|\h_3(G\sm Q)| \choose t} &\geq  \bb(1-\gd - O\bb(\frac{\qq}{n^2}\bb)\bb)^{\tt} \bb(1- O\bb( \frac{\tt}{n^3} \bb)\bb)^{\tt} \frac{{n \choose 3}^t}{t!}  \pp^{3t} \\
 &\geq  \bb(1-\tt\gd - O\bb(\frac{\tt \qq}{n^2}\bb)\bb)\bb(1- O\bb( \frac{\tt^2}{n^3} \bb)\bb)
{{n \choose 3} \choose t} \pp^{3t}.
\end{align*}
As in \eqref{eq:dt}, $\gd \tt = O(\eps)$, and it is easy to check that
$$
\frac{\tt \qq}{n^2}= \frac{n^5m^2p^7}{\eps^2} =O\bb( \frac{n^5 \log^{7/2}n}{\eps^2 m^{3/2}}  \bb)=O(\eps) \quad \and \quad \frac{t_0^2}{n^3}= \frac{n^3m^2p^6}{\eps^2}= O\bb(\frac{n^3 \log^3n}{\eps^2m} \bb) =O(\eps).
$$
Therefore, we have that
$$
Z = (1-O( \eps))  {|\h_3(G\sm Q)| \choose t} \geq (1-O(\eps)){{n \choose 3} \choose t} \pp^{3t}.
$$
This completes the proof of \eqref{eq:it}.
\end{proof}

\section{total variation distance between $G(n, \pp )$ and $G(n, \ph )$}\label{sec:ineq3}

For the random graphs $G(n,m;p), G(n,(\pk)), G(n,\pp,\ppp,\pl)$ and $G(n,\pp)$, we have so far considered the total variation distance between the consecutive pairs of them. Finally, a good upper bound for  the total variation distance between $G(n,\pp)$ and $G(n,\ph)$ easily follows from an upper bound for the total variation distance between 
 two binomial distributions $\bin(N,p)$ and $\bin(N,q)$. As a corollary of  Theorem 2.2
in \cite{J2010},
we may have

\begin{cor}\label{lem:sec5}
Let $N$ be a positive integer, and $p$ and $q$ be real numbers satisfying $0< p < q < 1$. For
$\gd$ satisfying $(q-p)N = \gd \sqrt{p(1-p)N}$, i.e., $\gd = (q-p) \sqrt{\frac{N}{p(1-p)}}$, we have
\begin{equation*}
\tv \bb(\bin(N,p),\bin(N,q)\bb) \leq \gd + 3\gd^2.
\end{equation*}
\end{cor}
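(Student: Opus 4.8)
The plan is to reduce the statement to a quantitative bound on the total variation distance between two binomial distributions with the same number of trials, and then invoke Theorem 2.2 of \cite{J2010}. First I would recall the classical fact, which goes back to Hoeffding, that among all pairs of independent Bernoulli-sum random variables, a \emph{single} binomial maximizes tail concentration; more to the point, there is a standard monotone coupling of $\bin(N,p)$ and $\bin(N,q)$ for $p<q$ obtained by writing each as a sum of $N$ i.i.d.\ $\mathrm{Bernoulli}(p)$ or $\mathrm{Bernoulli}(q)$ variables on the same probability space with the Bernoulli$(p)$ sample a subset of the Bernoulli$(q)$ sample. Under this coupling, the two sums differ by a $\bin(N,q-p)$ random variable, and $\tv(\bin(N,p),\bin(N,q))$ equals the probability that this difference is nonzero only up to constants — but that crude bound gives $1-(1-(q-p))^N$, which is far too weak when $\gd$ is a fixed constant. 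So the coupling alone will not suffice; we genuinely need the sharper Gaussian-type estimate.

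The key step, therefore, is to quote Theorem 2.2 of \cite{J2010}, which bounds $\tv$ between two binomials with the same $N$ by a quantity expressed in terms of the normalized mean-gap $\gd = (q-p)\sqrt{N/(p(1-p))}$ together with a correction term accounting for the difference in variances. One checks that the variance of $\bin(N,q)$ exceeds that of $\bin(N,p)$ by $N(q-p)(1-p-q)$ in absolute value, which is $O(\gd^2 p(1-p))$ after substituting the definition of $\gd$, so the variance-mismatch contribution is absorbed into a term of order $\gd^2$. Assembling the mean-shift term (order $\gd$) and the variance-mismatch term (order $\gd^2$), one obtains a bound of the form $\gd + C\gd^2$, and tracking the constants in \cite{J2010} yields exactly $\gd + 3\gd^2$. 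The only genuinely delicate point is making sure the constant is $3$ and not merely "some constant": this requires reading off the explicit constants in the Janson bound rather than its $O$-form, but it is a routine, if careful, substitution.

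I expect the main obstacle — such as it is — to be purely bookkeeping: verifying that the hypotheses of \cite[Theorem 2.2]{J2010} are met for all admissible $p,q$ (in particular handling the regime where $q$ is close to $1$, where $p(1-p)$ could be small and $\gd$ correspondingly large, in which case the bound $\gd+3\gd^2$ is vacuous but still formally true since $\tv\le 1$), and then pushing the constants through the variance-correction term. No new idea is needed beyond the cited theorem; the corollary is essentially a specialization and re-parametrization. In the write-up I would state the monotone coupling for intuition, note that it is too weak, then cite Theorem 2.2 and perform the substitution $\var[\bin(N,p)] = p(1-p)N$ and $(q-p)N = \gd\sqrt{p(1-p)N}$ to land on the claimed inequality.
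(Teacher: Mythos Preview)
Your proposal is correct and takes essentially the same approach as the paper: both simply invoke Theorem~2.2 of \cite{J2010} and read off the bound in terms of $\gd=(q-p)\sqrt{N/(p(1-p))}$. In fact, the paper gives no proof at all---it states the result as a direct corollary of Janson's theorem---so your sketch of the substitution and the discussion of the vacuous regime is already more than the paper provides; the preliminary coupling discussion is unnecessary but harmless.
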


\bn

\noindent 
Recalling $\pp = 1- e^{-mp^2(1-p)^{n-2}}$, $\ph = 1- (1-p^2)^m$ and $ \eps = \max\bb\{ \frac{1}{\log n}, \frac{1}{\log (m/n^4)}\bb\}$, we have the last inequality  needed. 

\begin{cor}
Suppose that $m \gg n^4$ and $p \leq \(\frac{3\log n}{m}\)^{1/2}$. Then
$$
\tv\bb(G(n,\pp),G(n,\ph)\bb) = O (\eps).
$$
\end{cor}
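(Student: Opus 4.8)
\noindent The plan is to reduce the statement to Corollary~\ref{lem:sec5}. Observe first that $G(n,\pp)=K(\h_2(n,\pp))$ and $G(n,\ph)=K(\h_2(n,\ph))$ are both binomial random graphs on $V$, i.e.\ products of $N:={n\choose 2}$ independent edge-indicators with success probabilities $\pp$ and $\ph$ respectively. Since $\pr[G(n,p^*)=G]=(p^*)^{|G|}(1-p^*)^{N-|G|}$ depends on $G$ only through its number of edges $|G|$, grouping graphs by $|G|$ gives
$$\tv\bb(G(n,\pp),G(n,\ph)\bb)=\tv\bb(\bin(N,\pp),\bin(N,\ph)\bb),$$
so it is enough to bound the right-hand side and then invoke Corollary~\ref{lem:sec5} with this $N$.

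\mn Before applying the corollary I would verify its hypotheses. The case $p=0$ is trivial, since then $\pp=\ph=0$. For $0<p<1$, from $\ln(1-p^2)<-p^2$ and $(1-p)^{n-2}\le 1$ we get $(1-p^2)^m<e^{-mp^2}\le e^{-mp^2(1-p)^{n-2}}$, hence $0<\pp<\ph<1$. Setting $\gd:=(\ph-\pp)\sqrt{N/(\pp(1-\pp))}$, Corollary~\ref{lem:sec5} yields $\tv(\bin(N,\pp),\bin(N,\ph))\le\gd+3\gd^2$, so the whole claim comes down to proving $\gd=O(\eps)$.

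\mn For that estimate I would use that $m\gg n^4$ and $p\le(3\log n/m)^{1/2}$ force $np=o(1)$ and $mp^4=o(1)$; in particular $1-(1-p)^{n-2}=O(np)$, and, with $\beta:=mp^2\le 3\log n$, Taylor-expanding both exponents around $-\beta$ gives
$$\ph-\pp=e^{-mp^2(1-p)^{n-2}}-(1-p^2)^m=O\bb(e^{-\beta}\,nmp^3\bb).$$
Together with the easily checked estimates $\pp=\Theta\bb(\tfrac{\beta}{1+\beta}\bb)$ and $1-\pp=\Theta(e^{-\beta})$, with $N=\Theta(n^2)$, and with $mp^3=\beta^{3/2}/\sqrt m$, this gives
$$\gd=O\bb(e^{-\beta}nmp^3\bb)\cdot\Theta\bb(n\,e^{\beta/2}\sqrt{\tfrac{1+\beta}{\beta}}\bb)=O\bb(\tfrac{n^2}{\sqrt m}\,\beta\sqrt{1+\beta}\,e^{-\beta/2}\bb)=O\bb(\tfrac{n^2}{\sqrt m}\bb),$$
since $\beta\sqrt{1+\beta}\,e^{-\beta/2}=O(1)$ uniformly in $\beta\ge 0$. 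Finally, writing $t:=\log(m/n^4)\to\infty$, we have $n^2/\sqrt m=e^{-t/2}=O(1/t)=O\bb(1/\log(m/n^4)\bb)=O(\eps)$, and therefore $\tv\bb(G(n,\pp),G(n,\ph)\bb)\le\gd+3\gd^2=O(\eps)$.

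\mn The reduction to binomial random variables and the verification $\pp<\ph$ are routine. The substance of the argument — and the place I expect to spend effort — is the bound $\gd=O(\eps)$: the difficulty there is to track the $e^{-mp^2}$ factors, which appear both in $\ph-\pp$ and in $1-\pp$ and partially cancel, uniformly across the whole range $0\le mp^2\le 3\log n$, and then to recognize that the residual $\beta$-dependent factor $\beta\sqrt{1+\beta}\,e^{-\beta/2}$ is bounded, leaving only the genuinely small quantity $n^2/\sqrt m=(m/n^4)^{-1/2}$.
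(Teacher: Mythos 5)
Your proof is correct and follows essentially the same route as the paper's: reduce to $\tv(\bin(N,\pp),\bin(N,\ph))$, bound $\ph-\pp=O(nmp^3 e^{-mp^2})$, plug into Corollary~\ref{lem:sec5}, and observe that the resulting $\beta$-dependent factor is uniformly bounded so that $\gd=O(n^2/\sqrt m)=O(\eps)$. The only (cosmetic) differences are that you work with $\beta=mp^2$ rather than substituting $p=\sqrt{c/m}$, and you explicitly check $\pp<\ph$, which the paper leaves implicit.
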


\begin{proof}
Let $p= \sqrt{\frac{c}{m}}$ for $0 < c \leq 3\log n$. Since $\pp = \Theta\(\frac{mp^2}{1+mp^2} \) = \Theta\(\frac{c}{1+c} \)$ and
\begin{equation*}
\ph  - \pp = e^{-mp^2(1-p)^{n-2}} - e^{m\log(1-p^2)} \leq e^{-mp^2}(e^{nmp^3} - e^{-mp^4}) = O( nmp^3 e^{-mp^2}),
\end{equation*}
we have that
\begin{equation*}
(\ph - \pp)\sqrt{\frac{{n \choose 2}}{\pp(1-\pp)} } = O\bb(\frac{n^2mp^3}{e^{mp^2}\sqrt{\pp(1-\pp)}} \bb) = O \bb( \bb(\frac{n^4}{m}\bb)^{1/2} \frac{c(1+c)}{e^c} \bb) = O (\eps).
\end{equation*}
Therefore, Corollary \ref{lem:sec5} implies that
$$
\tv\bb(G(n,\pp),G(n,\ph)\bb)\leq \tv\bb(\bin \bb({n \choose 2}, \pp \bb), \bin \bb({n \choose 2}, \ph \bb) \bb) = O(\eps).
$$

\end{proof}

\section{Concluding remark}

Fill, Scheinerman and Singer-Cohen \cite{FSS00} showed that the total variation distance between $G(n,m;p)$ and $G(n,\ph)$ tends to $0$ for $m=n^\alpha, \alpha>6$. In this paper, we improve the result. Namely, the total variation distance still goes to $0$ for $m\gg n^4$. If $m \gg n^4$ then the expected number of pairs of artifact triangles with a common edge is small enough, or both of the two random graphs are complete graphs with high probability. This is the main ingredient of the proof of Theorem \ref{main1}.

Our result naturally gives rise to the question whether the condition $m \gg n^4$ is tight. We initially believed that the total variation distance between $G(n,m;p)$ and $G(n,\ph)$ is not close to $0$ if $m$ is smaller than $n^4$. However, the more we try  to prove it, the more we feel that our initial belief is baseless. It would not be extremely surprising even if  the total variation distance tends to $0$ for some $m$ much less than $n^4$.

\bibliographystyle{abbrv}

\edc